\documentclass[11pt]{amsart}
\usepackage{verbatim}
\usepackage{amsmath,amssymb,amsfonts,bm,amscd,amsthm}
\usepackage{fancyhdr}
\usepackage{psfrag}
\usepackage{tikz}
\usepackage{hyperref}
\hypersetup{%
  colorlinks = true,
  linkcolor  = red
}
\usetikzlibrary{matrix}
\makeatletter
\@namedef{subjclassname@1991}{2020 Mathematics Subject Classification}
\makeatother

\oddsidemargin -0.3truein
\evensidemargin -0.3truein
\textwidth 7.1truein
\topmargin -0.5in
\textheight 9.4in
\setlength{\parskip}{2mm}
\setlength{\parindent}{0mm}
\numberwithin{equation}{section}

\newtheorem{thm}{Theorem}[section]
\newtheorem{corollary}[thm]{Corollary}
\newtheorem{lemma}[thm]{Lemma}
\newtheorem{proposition}[thm]{Proposition}

\newtheorem{example}[thm]{Example}

\newtheorem*{notation}{Notation}

\newtheorem{defn}[thm]{Definition}

\DeclareMathOperator{\gldim}{gl.dim}
\DeclareMathOperator{\Hom}{Hom}
\DeclareMathOperator{\End}{End}
\DeclareMathOperator{\Pd}{pd}
\DeclareMathOperator{\lead}{lead}


\renewcommand{\Im}{\textup{Im}}

\newtheorem*{Problem One}{Problem One}
\newtheorem*{Problem Two}{Problem Two}
\newtheorem*{Problem Three}{Problem Three}
\newtheorem*{Problems Four and Five}{Problems Four and Five}
\newtheorem*{Problem Five}{Problem Five}

\begin{document}
\title{{\bf CONSTRUCTING ENDOMORPHISM RINGS OF LARGE FINITE GLOBAL DIMENSION}}
\author{Ali Mousavidehshikh}
\address[Ali Mousavidehshikh]{Department of Mathematics and Computer Science\\University of Toronto, Mississauga Campus\\3359 Mississauga Road, Mississauga, ON, L5L 1C6, Canada}
\email{ali.mousavidehshikh@utoronto.ca}
\subjclass{18G05(primary),18G55(secondary)}
\date{}
\maketitle
\begin{center}$Dedicated~to~the~memory~of~Ragnar$-$Olaf~Buchweitz$\end{center}
\begin{abstract} In this paper we study endomorphism rings of finite global dimension over a ring associated to a numerical semigroup. We construct these endomorphism rings in two ways, called the lazy and greedy construction. The first main result of this paper shows that the lazy construction enables us to obtain endomorphism rings of arbitrarily large global dimension. The second main result of this paper shows that  the greedy construction gives us endomorphism rings which always have global dimension two. As a consequence, for a fixed numerical semigroup, the difference of the maximal possible value and the minimal possible value of the global dimension of an endomorphism ring over that ring can be arbitrarily large.\end{abstract}

\begin{center}\tableofcontents \end{center}

\section{Introduction}\label{Introduction}

\hspace{0.5cm} The global dimension of a ring is one of the most fundamental invariants. It measures the complexity of the category of modules over a ring $R$ by looking at how far $R$-modules are being from projective. It plays important roles in algebra and geometry. For example, Auslander-Buchsbaum-Serre Theorem characterizes commutative regular local rings in terms of finiteness of global dimension. 

\hspace{0.5cm} In representation theory, it often plays important roles to construct a finitely generated module $M$ over a given ring $R$ such that the endomorphism algebra $\End_{R}(M)$ has finite global dimension. A basic example appears in Auslander-Reiten theory: When $M$ is an additive generator of $_{R}\text{Mod}$ (finitely generated left $R$-modules, one can replace left by right), then $\End_{R}(M)$ has global dimension at most two (see \cite{A3} and \cite{ARS}). This gives a bijection $R\rightarrow \End_{R}(M)$ between representation-finite algebras and algebras with global dimension at most two and dominant dimension at least two. Another basic example due to Auslander shows that
\begin{eqnarray}\label{chain1}
\End_{R}(M),~\text{where}~M=\bigoplus_{i\geq 0}R/\text{rad}^{i}R,
\end{eqnarray}
has finite global dimension for any finite dimensional algebra $R$ (see \cite{A2,A1,ARS}).

\hspace{0.5cm} These classical results have been extensively studied by several authors, and a number of important applications are known, e.g. Auslander's representation dimension, Dlab-Ringel's approach to quasi-hereditary algebras of Cline-Parshall-Scott, Rouquier's dimensions of triangulated categories, cluster tilting in higher dimensional Auslander-Reiten theory, and non-commutative resolutions in algebraic geometry due to Van den Bergh and others. In Krull dimension one, there is a natural analog of the construction (\ref{chain1}). 
\begin{thm}\label{t1.3.1} Let 
\begin{eqnarray}\label{chain3}
(R,m)=(R_{1},m_{1})\subseteq (R_{2},m_{2})\subseteq \ldots \subseteq (R_{l-1},m_{l-1})\subseteq (R_{l},m_{l})
\end{eqnarray}
be a chain of local Noetherian rings, where for each $i$, $R_{i}$ is commutative, reduced, complete (with respect to its Jacobson radical), has Krull dimension one, and $R_{l}$ is regular. If $R_{i+1}\subseteq\End_{R_{1}}(m_{i})$ for $1\leq i\leq l-1$, then 
\begin{eqnarray}\label{chain2}
E:=\End_{R}(M),~\text{where}~M:=\bigoplus_{i=1}^{l}R_{i},
\end{eqnarray}
has global dimension at most $l$. 
\end{thm}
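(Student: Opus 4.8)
The plan is to argue by induction on the length $l$ of the chain. For $l=1$ the ring $R=R_1$ is a complete, reduced, one-dimensional regular local ring, hence a discrete valuation ring; then $M=R$, $E=\End_R(R)=R$, and $\gldim E=1$. So assume $l\ge 2$ and that the theorem holds for all shorter chains.

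First I would record the structural input that drives the argument. Since each $R_i$ is reduced of Krull dimension one, $m_i$ contains a non-zerodivisor, so $m_i$ is a faithful torsion-free $R_i$-module; hence $\End_{R_1}(m_i)$ is computed inside the common total quotient ring $Q:=Q(R_1)=Q(R_i)$ and equals $(m_i:_Q m_i)$, an $R_i$-subalgebra of $Q$ that is unchanged if $R_1$ is replaced by any $R_j$ with $j\le i$. Consequently all the $R_i$ are module-finite birational extensions of $R_1$ inside $Q$, and since $R_l$ is regular — hence a domain — $Q$ is a field and $R_1\subseteq\cdots\subseteq R_l\subseteq\overline{R_1}$. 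The hypothesis $R_{i+1}\subseteq\End_{R_1}(m_i)$ then says exactly that $m_i$ is a common ideal of $R_i$ and of $R_{i+1}$, with $m_iR_{i+1}=m_i$ — so $R_{i+1}/m_i$ is a finite-dimensional algebra over the residue field of $R_i$ — and $m_i\subseteq m_{i+1}$; this is the only place the ``blow-up'' hypothesis enters, and it is what forces the relevant short exact sequences to split off the right projectives. Finally, $E=\End_{R_1}(M)$ is module-finite over the complete local ring $R_1$, hence semiperfect and Noetherian, so it suffices to bound $\Pd_E S_i$ by $l$ for each simple right $E$-module $S_i:=P_i/\rad P_i$, where $P_i:=\Hom_R(M,R_i)$ are the indecomposable projectives; the left-module bound is obtained symmetrically.

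For the inductive step, let $e\in E$ be the idempotent attached to the direct summand $R_1$ of $M$. Then $eEe=\End_{R_1}(R_1)=R_1$, while by the coefficient-ring independence above $(1-e)E(1-e)=\End_{R_1}\!\bigl(\bigoplus_{i\ge 2}R_i\bigr)=\End_{R_2}\!\bigl(\bigoplus_{i\ge 2}R_i\bigr)=:E'$; the chain $R_2\subseteq\cdots\subseteq R_l$ satisfies all the hypotheses again (it still ends in the regular ring $R_l$, and $R_{i+1}\subseteq\End_{R_2}(m_i)$), so $\gldim E'\le l-1$ by the inductive hypothesis. A recollement estimate through $e$ is not available in the usual form, since $eEe=R_1$ fails to be regular once $l\ge 2$; so instead I would resolve the simples $S_i$ directly. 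The syzygy $\rad P_i$ is generated, as a right $E$-module, by $\Hom_R(M,m_i)$ together with the images of $\Hom_R(M,R_j)$ ($j\ne i$) under postcomposition with the non-invertible maps $R_j\to R_i$; using that $m_i$ is an $R_{i+1}$-ideal with $m_iR_{i+1}=m_i$, the sequence $0\to m_i\to R_{i+1}\to R_{i+1}/m_i\to 0$ stays exact under $\Hom_R(M,-)$ and exhibits $\Hom_R(M,m_i)$ — hence $\rad P_i$ up to a projective summand — as a syzygy of modules built out of $P_{i+1}$ and of $\Hom_R(M,-)$ of modules living over a strictly shorter chain. Feeding this into the inductive hypothesis yields $\Pd_E S_i\le l$ for every $i$ (in fact I expect $\gldim E=l$, the bound being attained at $S_l$), and hence $\gldim E\le l$.

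The step I expect to be the main obstacle is exactly the verification that these chain-compatible $\mathrm{add}(M)$-approximations remain exact after applying $\Hom_R(M,-)$ — equivalently, that the $R$-module syzygies produced along the way stay inside $\mathrm{add}(M)$, so that no extra homological length is introduced. This reduces to the vanishing of certain $\mathrm{Ext}^1_R$-obstructions, and it is here that $R_{i+1}\subseteq\End_{R_1}(m_i)$ is indispensable: it forces $m_i$, and the finite-length quotients $R_{i+1}/m_i$, to be assembled from the rings $R_j$ with $j>i$, so that the recursion resolving $S_i$ terminates after at most $l$ steps. Once that is established, the remainder is routine bookkeeping with the conductor ideals $(R_j:_Q R_i)$ making up the off-diagonal entries of the tiled order $E$.
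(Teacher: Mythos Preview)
The paper does not supply its own proof of this theorem; its entire proof reads ``See \cite{OI2} example 2.2.3 and \cite{GL}.'' So there is no in-paper argument to compare against, and what you have written is an attempt to reconstruct the argument behind those citations.

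Your sketch assembles the right structural ingredients: induction on $l$, the identification $(1-e)E(1-e)\cong E'$ for the shorter chain $R_2\subseteq\cdots\subseteq R_l$, the coefficient-ring independence $\Hom_{R_1}=\Hom_{R_j}$ on torsion-free modules over the larger ring, and the crucial observation that $R_{i+1}\subseteq(m_i:m_i)$ makes $m_i$ an $R_{i+1}$-module. This is close in spirit to Leuschke's argument in \cite{GL}. However, the actual work --- that the relevant $\mathrm{add}(M)$-approximations stay exact under $\Hom_R(M,-)$, equivalently that successive syzygies of the simples remain in $\mathrm{add}(M)$ --- is not carried out; you flag it yourself as ``the main obstacle'', and the sentence ``feeding this into the inductive hypothesis yields $\Pd_E S_i\le l$'' hides the entire content. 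In particular you do not explain how a projective $E'$-resolution is promoted to a projective $E$-resolution without gaining length: the recollement through $e$ fails (as you note, since $eEe=R_1$ is singular for $l\ge 2$), and the alternative mechanism is left as a promise rather than an argument. As written this is a plausible outline, not a proof.

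One incorrect side remark worth correcting: the bound is \emph{not} attained at $S_l$. In the numerical-semigroup setting that occupies the rest of the paper, Proposition~\ref{p1.3.6}(3) shows $\Pd_E(S_l)=2$ for every such chain, independently of $l$; when $\gldim E$ is large it is simples with \emph{small} index --- in the paper's main examples, $S_1$ --- whose projective dimension grows. Your intuition here is reversed: $R_l$ is the regular ring, so $P_l$ sits at the ``easy'' end of the chain.
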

\begin{proof} See \cite{OI2} example 2.2.3 and \cite{GL}.\end{proof}

\hspace{0.5cm} The ring $R=R_{1}$ is called the starting ring for the chain (\ref{chain3}). In general, given a ring $R$ of Krull dimension one it is a hard problem to understand all the endomorphism rings $\End_{R}(M)$ with finite global dimension, since there are a huge number of modules $M$ with $\End_{R}(M)$ having finite global dimension. A more reasonable problem is to determine the set of all possible values of the global dimension of $\End_{R}(M)$ in (\ref{chain2}), which Ballard-Favero-Katzarkov call the global spectrum of $R$. If $R$ is a commutative, reduced, complete, local Noetherian ring with Krull dimension one, then its normalization is an endomorphism ring of finite global dimension, which has global dimension one (since it is regular). In particular, for such rings, one is always an element of the global spectrum of $R$.

\subsection{Conventions.}
A ring is said to be complete if it is complete with respect to its Jacobson radical.

\subsection{Structure of the paper.}
The structure of this paper is as follows: In section \ref{Numerical Semigroups and Numerical Semigroup Rings} we give some of the necessary background on numerical semigroups and introduce some of the notations and definitions which will be used throughout the paper. In section \ref{CRC} we define the notion of a radical chain and construct two such chains which we call the lazy and greedy construction. We also associate an endomorphism ring to each of these constructions. In section \ref{Projective section} we analyse the projective and simple modules over our endomorphism rings. In section \ref{The functor lceil rceil} we introduce the functor $\lceil ~\rceil$ and some of its properties. This functor plays a crucial role in the proofs of the main results in this paper. In section \ref{Family of Starting Rings} we prove the two main results of this paper, first of which gives us endomorphism rings with arbitrarily large (but finite) global dimension (Theorems \ref{t2.4.1}, \ref{t2.4.3}), and the second being the construction of endomorphism rings which always have global dimension two (Theorem \ref{t3.3.1}). 

\section{Numerical Semigroups and Numerical Semigroup Rings}\label{Numerical Semigroups and Numerical Semigroup Rings}

\hspace{0.5cm} Let $\mathbb{N}$ be the set of the positive integers and $\mathbb{N}_{0}$ be the set of the non-negative integers. A set $\mathcal{H}\subseteq \mathbb{N}_{0}$ is called a $numerical~semigroup$ if zero is an element of $\mathcal{H}$, it is closed under addition, and $\mathbb{N}_{0}\setminus\mathcal{H}$ is a finite set. The $Frobenius~number$ of $\mathcal{H}$, denoted by $F(\mathcal{H})$, is the largest integer not in $\mathcal{H}$ (this is a finite number as $\mathbb{N}_{0}\setminus\mathcal{H}$ is a finite set). Notice that $F(\mathcal{H})=-1$ if and only if $\mathcal{H}=\mathbb{N}_{0}$, otherwise $F(\mathcal{H})\geq 2$. We define $e(\mathcal{H})=\min\lbrace n\in \mathbb{N}:n\in \mathcal{H}\rbrace$, called the multiplicity of $\mathcal{H}$.

\hspace{0.5cm} Given $A=\lbrace \alpha_{1},\alpha_{2},\ldots,\alpha_{r}\rbrace\subseteq \mathbb{N}$, we say that $A$ generates a numerical semigroup $\mathcal{H}$ if 
\begin{eqnarray*}
\mathcal{H}=\langle A\rangle:=\lbrace x_{1}\alpha_{1}+x_{2}\alpha_{2}+\ldots +x_{s}\alpha_{r}:x_{i}\in \mathbb{N}_{0}\rbrace.
\end{eqnarray*} 
We call $A$ a $generating~set$ for $\mathcal{H}$. The set $A$ is called a $minimal~generating~set$ for $\mathcal{H}$ if no proper subset of $A$ is a generating set for $\mathcal{H}$. It is a standard fact that $\langle A\rangle$ forms a numerical semigroup if and only if $\gcd(A)=1$, and every numerical semigroup arises this way. Furthermore, every numerical semigroup has a unique minimal generating set, and this set has finitely many elements (see \cite{Rosales1} and \cite{Rosales2}).

\hspace{0.5cm} Let $k$ be a field. We define $R(\mathcal{H})$ to be the subring of $k[[t]]$ generated by $t^{n}$ over $k$ for all $n\in \mathcal{H}$. We call $R(\mathcal{H})$ the numerical semigroup ring associated to $\mathcal{H}$. More precisely, If $\lbrace \alpha_{1},\alpha_{2},\ldots,\alpha_{r}\rbrace$ is a minimal generating set for the numerical semigroup $\mathcal{H}$, then
\begin{eqnarray*}
R(\mathcal{H})=\left\lbrace \sum_{\substack{i\geq 0\\\text{finite}}}a_{i}t^{i}:~a_{i}\in k,~i\in \mathcal{H}\right\rbrace =k[[t^{\alpha_{1}},t^{\alpha_{2}},\ldots ,t^{\alpha_{r}}]].
\end{eqnarray*}
Notice that the normalization $\tilde{R}(\mathcal{H})$ of $R(\mathcal{H})$ is the ring of formal power series $k[[t]]$. We set $F(R(\mathcal{H}))=F(\mathcal{H})$ and $e(R(\mathcal{H}))=e(\mathcal{H})$. Given a ring $R(\mathcal{H})$, the principal ideal generated by $t^{a}$ in $R(\mathcal{H})$ is denoted by $t^{a}R(\mathcal{H})$.

\hspace{0.5cm} For any numerical semigroup $\mathcal{H}$, $R(\mathcal{H})$ is a local, commutative, Noetherian, reduced, complete ring that has Krull dimension 1. Moreover, the normalization of $R(\mathcal{H})$, denoted by $\tilde{R}(\mathcal{H})$, is $k[[t]]$ (which is a regular ring), and the total quotient ring of $R(\mathcal{H})$ (obtained by inverting all non-zero divisors in $R(\mathcal{H})$), denoted by $\overline{R}(\mathcal{H})$, is $k((t))$ (which is a field).

\begin{defn} \normalfont Suppose $\mathcal{H}$ is a numerical semigroup with minimal generating set $\lbrace \alpha_{1},\alpha_{2},\ldots,\alpha_{r}\rbrace$. Given a non-negative integer number $b$, we define $\mathcal{H}[[b]]$ to be the numerical semigroup generated by $\lbrace \alpha_{1},\alpha_{2},\ldots,\alpha_{r},b\rbrace$, i.e., $\mathcal{H}[[b]]=\langle \alpha_{1},\alpha_{2},\ldots,\alpha_{r},b\rangle$.\end{defn} 
 
\begin{example} \normalfont Let $\mathcal{H}=\langle 5,8,17,19\rangle$ and $\mathcal{H}^{\prime}=\mathcal{H}[[14]]$. Then, $R(\mathcal{H})=k[[t^{5},t^{8},t^{17},t^{19}]]$, and 
\begin{eqnarray*}
e(R(\mathcal{H}))=5\text{ and }F(R(\mathcal{H}))=14.
\end{eqnarray*}
Moreover, $\mathcal{H}^{\prime}=\langle 5,8,14,17\rangle$, $R(\mathcal{H}^{\prime})=k[[t^{5},t^{8},t^{14},t^{17}]]$, and
\begin{eqnarray*}
e(R(\mathcal{H}^{\prime}))=5\text{ and }F(R(\mathcal{H}^{\prime}))=12.
\end{eqnarray*}\end{example}

\begin{defn}\normalfont We call $S$ a numerical semigroup ring provided $S=R(\mathcal{H})$ for some numerical semigroup $\mathcal{H}$.\end{defn}

\section{Construction of Radical Chains}\label{CRC}

\hspace{0.5cm} Notice that $\mathcal{H}[[b]]=\mathcal{H}$ if and only if $b\in\mathcal{H}$. Suppose $\mathcal{H}$ is a numerical semigroup such that $F(\mathcal{H})>-1$. Then, $R(\mathcal{H})\neq \tilde{R}(\mathcal{H})=k[[t]]$, and we have $R(\mathcal{H})\subsetneq \End_{R(\mathcal{H})}(m)\subseteq \tilde{R}(\mathcal{H})$ (up to canonical identification), where $m$ is the maximal ideal of $R(\mathcal{H})$ (see \cite{Grauert,Hong,WV}). Set $R_{1}=R(\mathcal{H})$ and $m=m_{1}$. It is easy to see that $\End_{R_{1}}(m_{1})=R(\mathcal{K})$ for some numerical semigroup $\mathcal{K}$, where $\mathcal{H}\subsetneq \mathcal{K}$. Pick a ring $R_{2}$ such that $R_{1}\subseteq R_{2}\subseteq \End_{R_{1}}(m_{1})$. Again, it is easy to see that $R_{2}=R(\mathcal{H}^{\prime})$ for some numerical semigroup $\mathcal{H}^{\prime}$, where $\mathcal{H}\subseteq \mathcal{H}^{\prime}\subseteq \mathcal{K}$. If $R_{2}=k[[t]]$, then $R_{2}=\End_{R_{1}}(m_{1})= k[[t]]$ in which case we define $M:=R_{1}\oplus R_{2}$, and $E:=\End_{R_{1}}(M)$. If $R_{2}\neq k[[t]]$, repeat the process to obtain $R_{3}$ such that $R_{2}\subseteq R_{3}\subseteq \End_{R_{2}}(m_{2})\subseteq k[[t]]$, where $m_{2}$ is the maximal ideal of $R_{2}$. If $R_{3}=k[[t]]$, define $M:=R_{1}\oplus R_{2}\oplus R_{3}$, and $E:=\End_{R_{1}}(M)$. If $R_{3}\neq k[[t]]$, repeat the process to obtain $R_{4}$, and continue in this fashion. Notice that all the rings in our chain are numerical semigroup rings associated to some numerical semigroup, and thus are commutative, complete, local, Noetherian, reduced, and have Krull dimension 1. Moreover, since $R_{1}\subseteq R_{i}$ for all $i$, we have $\End_{R_{i}}(m_{i})=\End_{R_{1}}(m_{i})$. Of course, it is possible that $R_{1}=R_{2}=R_{3}=\ldots$. To avoid such chains we make the additional restriction that all the containments must be strict except for finitely many. Since $R_{1}$ is missing only finitely many powers of $t$, there exists an $l$ such that $R_{l}=\tilde{R}_{1}=k[[t]]$, at which time we stop the chain. This leads us to the following definition.
\begin{defn}\label{radical chains}
\normalfont Let $(R,m)$ be a commutative, complete, local, Noetherian, reduced ring with Krull dimension one such that its normalization $\tilde{R}$ is regular and $R\neq \tilde{R}$. A radical chain starting from $R$ is a chain of commutative, complete, local, Noetherian, reduced rings
\begin{eqnarray}\label{CHAIN}
(R,m)=(R_{1},m_{1})\subseteq (R_{2},m_{2})\subseteq (R_{3},m_{3})\subseteq ...\subseteq (R_{l-1},m_{l-1})\subsetneq (R_{l},m_{l}),
\end{eqnarray}
where $R_{l}=\tilde{R_{1}}$, the Krull dimension of $R_{i}$ is one for $1\leq i\leq l$, and such that $R_{i}\subseteq\End_{R_{i-1}}(m_{i-1})=\End_{R_{1}}(m_{i-1})$ for each $2\leq i\leq l$, and define 
\begin{eqnarray*}
\displaystyle E=\End_{R_{1}}(M),\text{ where }M=\bigoplus_{i=1}^{l}R_{i}.
\end{eqnarray*}
\end{defn}
$Remark~1$. Notice that all rings are allowed to be repeated in the radical chain except for the normalization $R_{l}=\tilde{R}_{1}$.

$Remark~2$. By the paragraph preceding definition \ref{radical chains}, any numerical semigroup ring has a radical chain, and every ring in the radical chain is a numerical semirgoup ring. Moreover, there are several radical chains with the same starting ring.
\begin{example} \normalfont Let $\mathcal{H}=\langle 4,5,6,7\rangle$ and $R_{1}=R(\mathcal{H})=k[[t^{4}, t^{5},t^{6},t^{7}]]$. Then,
\begin{eqnarray*}
R_{1}\subseteq k[[t^{3},t^{4},t^{5}]]\subseteq k[[t]]~\text{and}~R_{1}\subseteq k[[t^{2},t^{3}]]\subseteq k[[t]]
\end{eqnarray*}
are both radical chains starting at $R_{1}$.\end{example}
\begin{corollary}\label{theorem} Given a radical chain (\ref{CHAIN}), $\gldim(E)\leq l$.\end{corollary}
\begin{proof} This is a consequence of Theorem \ref{t1.3.1}.\end{proof} 
\hspace{0.5cm} In a radical chain (\ref{CHAIN}), suppose $R_{i}=R(\mathcal{H})$ for some numerical semigroup $\mathcal{H}$. Define $R_{i,0}=R_{i}$, and 
\begin{eqnarray*}
R_{i,j}=\left\lbrace\sum_{\text{finite}}a_{i}t^{i}:a_{i}\in k,~i\in\mathcal{H}\setminus\lbrace \beta_{1},\beta_{2},\ldots,\beta_{j}\rbrace\right\rbrace ~\text{for }1\leq j\leq r,
\end{eqnarray*}
where $0=\beta_{1}<\beta_{2}<\ldots<\beta_{r}<F(\mathcal{H})$ is a list of all the elements in $\mathcal{H}$ up to $F(\mathcal{H})$ in ascending order (notice that $F(\mathcal{H})\notin \mathcal{H}$). Moreover, $R_{i,j}$ is an ideal of $R_{i}$ for $0\leq j\leq r$ and $R_{i,1}=m_{i}$. 

\begin{example} \normalfont Let $\mathcal{H}=\langle 5,8,17,19\rangle$ and $R_{1}=R(\mathcal{H})=k[[t^{5},t^{8},t^{17},t^{19}]]$. Then, $R_{1,0}=R_{1}$, $F(R(\mathcal{H}))=14$, and $0<5<8<10<13$ is a list of all elements in $\mathcal{H}$ up to $F(R(\mathcal{H}))$ in ascending order. So 
\begin{eqnarray*}
R_{1,1}&=&\left\lbrace\sum_{\text{finite}}a_{i}t^{i}:a_{i}\in k,~i\in\mathcal{H}\setminus\lbrace 0\rbrace\right\rbrace =m_{1},~R_{1,2}=\left\lbrace\sum_{\text{finite}}a_{i}t^{i}:a_{i}\in k,~i\in\mathcal{H}\setminus\lbrace 0,5\rbrace\right\rbrace ,\\R_{1,3}&=&\left\lbrace\sum_{\text{finite}}a_{i}t^{i}:a_{i}\in k,~i\in\mathcal{H}\setminus\lbrace 0,5,8\rbrace\right\rbrace ,~R_{1,4}=\left\lbrace\sum_{\text{finite}}a_{i}t^{i}:a_{i}\in k,~i\in\mathcal{H}\setminus\lbrace 0,5,8,10\rbrace\right\rbrace ,\\R_{1,5}&=&\left\lbrace\sum_{\text{finite}}a_{i}t^{i}:a_{i}\in k,~i\in\mathcal{H}\setminus\lbrace 0,5,8,10,13\rbrace\right\rbrace .
\end{eqnarray*}
\end{example} 

\hspace{0.5cm} We now construct two radical chains with both having the same starting ring. One of these constructions maximizes the length of the radical chain (called the ``lazy" construction), while the other minimizes the length of the radical chain (called the ``greedy" construction). 

\hspace{0.5cm} Given a numerical semigroup $\mathcal{H}\neq \mathbb{N}_{0}$, let $R=R(\mathcal{H})$. Notice that $\mathcal{H}$ has a minimal generating set, say $\lbrace \alpha_{1},\alpha_{2},\ldots ,\alpha_{s}\rbrace$ written in ascending order. So $\mathcal{H}=\langle \alpha_{1},\alpha_{2},\ldots ,\alpha_{s}\rangle$ , equivalently $R=k[[t^{\alpha_{1}},t^{\alpha_{2}},\ldots ,t^{\alpha_{s}}]]$. Given a non-negative integer $b$ with $b\neq \alpha_{i}$, we define $\mathcal{H}[[b]]=\langle \alpha_{1},\alpha_{2},...,\alpha_{s},b\rangle $. Since $\gcd (\alpha_{1},\alpha_{2},\ldots ,\alpha_{s})=1$ implies that $\gcd (\alpha_{1},\alpha_{2},\ldots ,\alpha_{s},b)=1$, the set $\mathcal{H}[[b]]$ is a numerical semigroup. We define $R[[t^{b}]]=R(\mathcal{H}[[b]])$, i.e., $R[[t^{b}]]$ is the numerical semigroup ring associated to $\mathcal{H}[[b]]$. It should be noted that $\mathcal{H}\subseteq \mathcal{H}[[b]]$, and equality holds if and only if $b\in \mathcal{H}$. Set $R=R_{1}$ and  define $R_{i}=R_{i-1}[[t^{F(R_{i-1})}]]~\text{for}~i\geq 2$. Since only finitely many powers of $t$ are missing from $R_{1}$, there exists an $l\geq 2$ such that $R_{l}=k[[t]]$. In particular, we have constructed the following radical chain of rings: $R_{1}\subsetneq R_{2}\subsetneq \dots \subsetneq R_{l}=k[[t]]$. By Theorem \ref{t1.3.1}, $\gldim(E)\leq l$. The radical chain of rings just constructed, the module $M$, and the ring $E$ are said to be constructed via the ``lazy" construction.

\hspace{0.5cm} To the other extreme, let $R_{1}$ be the same ring as in the previous paragraph and define $R_{2}=\End_{R_{1}}(m_{1})$. Notice that $R_{2}$ is a numerical semigroup ring and $R_{1}\subseteq R_{2}\subseteq\tilde{R}_{1}=k[[t]]$ (see \cite{Grauert,Hong,WV}). If $R_{2}=k[[t]]$, then stop. If not, let $R_{3}=\End_{R_{1}}(m_{2})$ ($R_{3}$ is a numerical semigroup ring and $R_{2}\subseteq R_{3}\subseteq \tilde{R}_{2}=\tilde{R}_{1}=k[[t]]$). If $R_{3}=k[[t]]$, then stop. Otherwise, continue the process. Since only finitely many positive powers of $t$ are missing from $R_{1}$, there exist a natural number $l$ such that $R_{l}=k[[t]]$. In particular, $R_{i}=\End_{R_{1}}(m_{i-1})~\text{for}~2\leq i\leq l$. Since $R_{1}$ is  a numerical semigroup ring, $R_{i}$ is a numerical semigroup ring for each $1\leq i\leq l$. The radical chain of rings $R_{1}\subsetneq R_{2}\subsetneq ...\subsetneq R_{l}=k[[t]]$, the module $M$, and the ring $E$ are said to be constructed via the ``greedy" construction. By Theorem \ref{t1.3.1}, $\gldim(E)\leq l$. This is the construction given in \cite{GL}.

\section{Right Indecomposable Projective and Simple Modules Over $\End_{R}(M)$}\label{Projective section}

\hspace{0.5cm} We begin with a well known result.
\begin{thm}\label{t1.1.1} Let R be a complete local Noetherian commutative ring, and $A$ be a R-algebra which is finitely generated as an R-module. Then $\overline{A}=A/J(A)$ is a semi-simple Artinian ring, where $J(A)$ is the Jacobson radical of $A$. Suppose that $1 = e_{1}+...+e_{n}$ is a decomposition of $1\in A$ into orthogonal primitive
idempotents in $A$. Then
\begin{eqnarray*}
A=\bigoplus_{i=1}^{n}e_{i}A
\end{eqnarray*}
is a decomposition of $A$ into indecomposable right ideals of A and
\begin{eqnarray*}
\overline{A}=\bigoplus_{i=1}^{n}\overline{e}_{i}\overline{A}
\end{eqnarray*}
is a decomposition of $\overline{A}$ into minimal right ideals. Moreover, $e_{i}A\cong e_{j}A$ if and only
if $\overline{e}_{i}\overline{A}\cong \overline{e}_{j}\overline{A}$ (see \cite{IR} Theorem 6.18, 6.21 and Corollary 6.22). \end{thm}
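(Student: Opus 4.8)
The plan is to deduce the statement from the standard structure theory of module‑finite algebras over a complete local ring, assembling a handful of classical lemmas. First I would fix the topological setup: writing $\mathfrak{m}$ for the maximal ideal of $R$, the algebra $A$ is $\mathfrak{m}$‑adically complete because it is a finitely generated module over the complete ring $R$, and $\mathfrak{m}A\subseteq J(A)$. Since $A/\mathfrak{m}A$ is a finite‑dimensional algebra over the field $R/\mathfrak{m}$ it is Artinian, and its radical $J(A)/\mathfrak{m}A$ is nilpotent; hence $J(A)^{n}\subseteq \mathfrak{m}A\subseteq J(A)$ for some $n$, so the $J(A)$‑adic and $\mathfrak{m}A$‑adic topologies on $A$ coincide and $A$ is complete with respect to $J(A)$. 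Because $\overline{A}=A/J(A)$ is a quotient of the Artinian ring $A/\mathfrak{m}A$ with $J(\overline{A})=0$, it is semisimple Artinian.

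Next I would invoke idempotent lifting, which is available thanks to $J(A)$‑adic completeness: a decomposition of $\overline{1}$ into orthogonal primitive idempotents of $\overline{A}$ lifts to an orthogonal decomposition $1=e_{1}+\dots+e_{n}$ in $A$ with each $e_{i}$ primitive, and conversely the reduction modulo $J(A)$ of an orthogonal (resp. primitive) family of idempotents stays orthogonal (resp. primitive); this gives the dictionary between idempotents of $A$ and of $\overline{A}$. From $1=\sum_{i}e_{i}$ with the $e_{i}$ orthogonal one obtains $A=\bigoplus_{i}e_{i}A$ for the right regular module, and $e_{i}A$ is indecomposable precisely when $e_{i}$ is primitive, which holds precisely when $\End_{A}(e_{i}A)\cong e_{i}Ae_{i}$ is local. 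I would check that last point by noting that $e_{i}Ae_{i}$ is again a finitely generated $R$‑module, hence complete with respect to its radical $e_{i}J(A)e_{i}$, and its residue ring $\overline{e}_{i}\overline{A}\,\overline{e}_{i}$ is a division ring because $\overline{e}_{i}$ is primitive in the semisimple ring $\overline{A}$. On the $\overline{A}$ side the decomposition $\overline{A}=\bigoplus_{i}\overline{e}_{i}\overline{A}$ into minimal right ideals is exactly the Wedderburn decomposition.

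For the isomorphism criterion, one direction is immediate: if $e_{i}A\cong e_{j}A$ as right $A$‑modules then reducing modulo $J(A)$, i.e. applying $-\otimes_{A}\overline{A}$, gives $\overline{e}_{i}\overline{A}\cong\overline{e}_{j}\overline{A}$. For the converse I would lift: given an isomorphism $\overline{\varphi}\colon\overline{e}_{i}\overline{A}\to\overline{e}_{j}\overline{A}$, use projectivity of $e_{i}A$ to lift it along the two projections to a homomorphism $\varphi\colon e_{i}A\to e_{j}A$; then $\varphi$ is surjective by Nakayama since it is surjective modulo $J(A)$, and since $e_{j}A$ is projective $\varphi$ splits, giving $e_{i}A\cong e_{j}A\oplus K$. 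Reducing modulo $J(A)$ and using that $\overline{e}_{i}\overline{A}\cong\overline{e}_{j}\overline{A}$ is simple forces $\overline{K}=0$, whence $K=0$ by Nakayama and $\varphi$ is an isomorphism.

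I expect the conceptual content to be light; the real work is the bookkeeping in the first two paragraphs — verifying carefully that $A$ is complete in the $J(A)$‑adic topology, so that both idempotent lifting and Nakayama's lemma genuinely apply, and checking that reduction modulo $J(A)$ is compatible with the idempotent decompositions well enough to run the splitting argument in the last step. Since all of this is standard, in the write‑up I would simply cite \cite{IR} (Theorems 6.18 and 6.21 and Corollary 6.22) for the remaining details rather than reproduce them.
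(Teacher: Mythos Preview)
Your proposal is correct and in fact more detailed than the paper's own treatment: the paper does not give a proof of this theorem at all but simply records it as a well-known result with the citation to \cite{IR} (Theorems 6.18, 6.21 and Corollary 6.22). Your sketch of the standard argument (completeness in the $J(A)$-adic topology, idempotent lifting, Nakayama, and the Wedderburn decomposition of $\overline{A}$) is accurate and would serve well as an expanded account, but for the purposes of matching the paper you need only the final citation.
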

\hspace{0.5cm} The preceding theorem says that the right indecomposable summands of $A$ are of the form $P_{i}=e_{i}A$. By definition, the $P_{i}$ are the right indecomposable projective modules over $A$. The modules $S_{i}=P_{i}/J(A)$ are the right simple modules over $A$ (as well as over the semi-simple algebra $\overline{A}$) and $P_{i}\to S_{i}\to 0$ is a projective cover. We denote the map $P_{i}\to S_{i}$ by $\pi_{i}$ (the quotient/natural map). In particular, $(P_{i},\pi_{i})$ is a projective cover for $S_{i}$.

\hspace{0.5cm} Recall that a finitely generated $R$-module $M$ is $torsion$-$free$ provided the natural map $M\rightarrow M\otimes_{R}\overline{R}$ is injective, where $\overline{R}$ is the total quotient ring of $R$. Suppose $R$ and $S$ are local, Noetherian, commutative, reduced rings, that are also complete with respect to their Jacobson radicals, respectively, and have Krull dimension 1. We say that $S$ is a $birational~extension$ of $R$ provided $R\subseteq S$ and $S$ is a finitely generated $R$-module contained in the total quotient ring $\overline{R}$ of $R$. Notice that if $S$ is a birational extension of $R$, then every finitely generated torsion-free $S$-module is a finitely generated torsion-free $R$-module, but not vice versa. The following lemma follows by clearing denominators.
\begin{lemma}\label{birational} Suppose $S$ is a birational extension of $R$. Let $C$ and $D$ be finitely generated torsion-free $S$-modules. Then $\Hom_{R}(C,D)=\Hom_{S}(C,D)$. Furthermore, if $M$ is a finitely generated torsion-free $R$-module, and $f:C\rightarrow M$ is an $R$-linear map, then the image of $f$ is an $S$-module.\end{lemma}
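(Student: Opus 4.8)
The plan is to exploit the hint and literally clear denominators. Since $S$ is a finitely generated $R$-module contained in $\overline{R}$, I can fix a finite generating set $s_{1},\dots,s_{n}$ of $S$ over $R$; each $s_{i}$ lies in $\overline{R}$, which is the localization of $R$ at its set of nonzerodivisors, so $s_{i}=a_{i}/b_{i}$ with $a_{i}\in R$ and $b_{i}$ a nonzerodivisor of $R$. Setting $c=b_{1}\cdots b_{n}$ produces a single nonzerodivisor $c\in R$ with $cS\subseteq R$ (a ``common denominator'' for all of $S$). The key property of such a $c$ is that multiplication by $c$ is injective on every finitely generated torsion-free $R$-module $N$: indeed $c$ becomes a unit in $\overline{R}$, and the natural map $N\to N\otimes_{R}\overline{R}$ is injective while $c$ acts invertibly on $N\otimes_{R}\overline{R}$.

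For the first assertion, the inclusion $\Hom_{S}(C,D)\subseteq\Hom_{R}(C,D)$ is automatic, so I only need to show that an $R$-linear map $f\colon C\to D$ is $S$-linear. Fix $s\in S$ and $x\in C$; then $sx\in C$ since $C$ is an $S$-module. Using $c\in R$, $cs\in R$, the $R$-linearity of $f$, and the $S$-module structures, one computes $c\bigl(f(sx)-s\,f(x)\bigr)=f\bigl((cs)x\bigr)-(cs)f(x)=0$. Now $D$, being a finitely generated torsion-free $S$-module, is a finitely generated torsion-free $R$-module (as noted just before the lemma), so multiplication by $c$ is injective on $D$ and hence $f(sx)=s\,f(x)$. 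Since $s$ and $x$ were arbitrary, $f\in\Hom_{S}(C,D)$.

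For the second assertion I first make precise what it means for the image to be an $S$-module: since $M$ is torsion-free over $R$ it embeds in $\overline{M}:=M\otimes_{R}\overline{R}$, which is a module over $\overline{R}\supseteq S$ and in particular an $S$-module, and I claim $\Im(f)\subseteq M\subseteq\overline{M}$ is an $S$-submodule of $\overline{M}$. Given $s\in S$ and $x\in C$, we have $sx\in C$, and inside $\overline{M}$ the identity $c\bigl(s\,f(x)\bigr)=(cs)f(x)=f\bigl((cs)x\bigr)=f\bigl(c(sx)\bigr)=c\,f(sx)$ holds by the $R$-linearity of $f$ and $cs\in R$. As $c$ is a unit in $\overline{R}$ and $\overline{M}$ is an $\overline{R}$-module, cancelling $c$ gives $s\,f(x)=f(sx)\in\Im(f)$. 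Hence $S\cdot\Im(f)\subseteq\Im(f)$, so $\Im(f)$ is an $S$-module contained in $M$, as desired.

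I expect no serious obstacle here: the only genuinely non-mechanical points are (i) extracting the uniform denominator $c$ from a finite $R$-generating set of $S$, and (ii) locating the ambient $\overline{R}$-module $\overline{M}$ in which the asserted $S$-action on $\Im(f)$ lives, after which both statements reduce to a one-line cancellation by $c$ using torsion-freeness — precisely the ``clearing denominators'' signalled in the statement.
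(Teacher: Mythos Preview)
Your argument is correct and is precisely the ``clearing denominators'' computation the paper has in mind; the paper itself gives no further details beyond the phrase ``follows by clearing denominators,'' and your writeup supplies exactly those details (the uniform denominator $c$ with $cS\subseteq R$, injectivity of $c$ on torsion-free modules, and the embedding $M\hookrightarrow M\otimes_{R}\overline{R}$ to make sense of the $S$-action on $\Im(f)$).
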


\hspace{0.5cm} For the remainder of this section, unless otherwise stated $(R,m)=(R_{1},m_{1})$ is a numerical semigroup ring and $R\neq k[[t]]$. Given a radical chain (\ref{CHAIN}), Theorem \ref{t1.3.1} implies that $\gldim(E)\leq l$.  We can represent $E$ as an $l\times l$ matrix. More specifically, $E_{ij}=\Hom_{R_{1}}(R_{j},R_{i})$. Given an integer $1\leq a\leq l$, the ring $R_{a}$ is a birational extension of $R_{1}$. Moreover, $R_{i}$ and $R_{j}$ are finitely generated torsion-free $R_{a}$-modules provided $a\leq i,j\leq l$. In particular, Lemma \ref{birational} implies that $\Hom_{R_{1}}(R_{j},R_{i})=\Hom_{R_{a}}(R_{j},R_{i})$ provided $a\leq i,j\leq l$. Hence, $E_{ij}=R_{i}$ for $1\leq j\leq i\leq l$. Moreover, 
\begin{eqnarray*}
(J(E))_{ij}=\begin{cases}m_{i}&\text{if}~R_{i}=R_{j}\\E_{ij}&\text{otherwise}\end{cases}\text{ (see \cite{WV})}.
\end{eqnarray*}It follows that if all the rings in a radical chain are distinct, then 
\begin{eqnarray*}
(J(E))_{ij}=\begin{cases}m_{i}&\text{if}~i=j\\E_{ij}&\text{otherwise}\end{cases}.
\end{eqnarray*}
\hspace{0.5cm} Since $E$ is an associative Noetherian ring with unity that is module finite over $R_{1}$ in its centre, the global dimension of $E$ is the supremum of the projective dimensions of the right (or left) simple $E$-modules (see \cite{Bass}, Proposition 6.7 page 125 or \cite{MR}, 7.1.14). Furthermore, by Theorem \ref{t1.1.1}, every simple right $E$-module $S_{i}$ has a projective cover $(P_{i},\pi_{i})$ and thus the category of finitely generated projective right $E$-modules is a Krull-Remak-Schmidt category (see Proposition 4.1 in \cite{Krause}). Consequently, given a simple right $E$-module $S$, the projective right $E$-modules in the projective resolution of $S$ are isomorphic to a finite direct sum of indecomposable projective modules (Krull-Remak-Schmidt Theorem).

\hspace{0.5cm} The ring $E$ has a decomposition $I_{l}=e_{1}+e_{2}+\ldots+e_{l}$ into orthogonal primitive idempotents, where $I_{l}$ is the $l\times l$ identity matrix, and $e_{i}$ is the $l\times l$ matrix with $1$ in the $ii$-th entry and zero otherwise. In particular, $E=\displaystyle\bigoplus_{i=1}^{l}e_{i}E$. Since $R_{1}$ is a complete local Noetherian commutative ring and $E$ is a finitely generated $R$-module, Theorem \ref{t1.1.1} implies that the right indecomposable projective modules of $E$ are the matrices $P_{i}=e_{i}E$. We sometimes identify $P_{i}$ with its non-zero row, that is, we think of $P_{i}$ as the $i$-th row of $E$. Furthermore, the right simple $E$-modules are $S_{i}=P_{i}/J(E)$. The maps $\pi_{i}:P_{i}\to S_{i}=P_{i}/J(E)$ are the quotient/natural maps and $(P_{i},\pi_{i})$ is a projective cover for $S_{i}$. If all the rings in a radical chain are distinct, then $S_{i}=e_{i}D_{l}$, where $D_{l}$ is the $l\times l$ diagonal matrix with diagonal entries $k$. Similar to the identification for projective modules, we sometimes identify $S_{i}$ with its non-zero row. Notice that under this identification, $P_{i}$ and $S_{i}$ are still right $E$-modules.

$Remark$. If a ring is repeated in our radical chain, then $S_{i}\neq e_{i}D_{l}$ (see next example).

\begin{example} \normalfont Let $R_{1}=k[[t^{2},t^{3}]],~R_{2}=k[[t]]$. Consider the following radical chains:
\begin{eqnarray*}
&\text{Radical chain 1:}&~R_{1}\subseteq R_{2},~M=R_{1}\oplus R_{2},~E=\End_{R_{1}}(M),\\
&\text{Radical chain 2:}&~R_{1}\subseteq R_{1}\subseteq R_{2},~M^{\prime}=R_{1}\oplus R_{1}\oplus R_{2},~E^{\prime}=\End_{R_{1}}(M^{\prime}).
\end{eqnarray*}
Then for radical chains 1 and 2, respectively, we have the following endomorphism ring, first right indecomposable projective module, Jacobson radical, and right simple module:
\begin{eqnarray*}
E=\left(\begin{matrix}R_{1}&m_{1}\\R_{2}&R_{2}\end{matrix}\right),~P_{1}&=&\left(\begin{matrix}R_{1}&m_{1}\\0&0\end{matrix}\right),~J(E)=\left(\begin{matrix}m_{1}&m_{1}\\R_{2}&m_{2}\end{matrix}\right)\Rightarrow S_{1}=\left(\begin{matrix}k&0\\0&0\end{matrix}\right)\\E^{\prime}=\left(\begin{matrix}R_{1}&R_{1}&m_{1}\\R_{2}&R_{2}&R_{2}\\R_{2}&R_{2}&R_{2}\end{matrix}\right),~P_{1}^{\prime}&=&\left(\begin{matrix}R_{1}&R_{1}&m_{1}\\0&0&0\\0&0&0\end{matrix}\right),~J(E^{\prime})=\left(\begin{matrix}m_{1}&m_{1}&m_{1}\\R_{2}&R_{2}&m_{2}\\R_{2}&R_{2}&m_{2}\end{matrix}\right)\Rightarrow S_{1}^{\prime}=\left(\begin{matrix}k&k&0\\0&0&0\\0&0&0\end{matrix}\right)
\end{eqnarray*}
We identify $P_{1}=(R_{1}~m_{1}),~S_{1}=(k~0),~P_{1}^{\prime}=(R_{1}~R_{1}~m_{1})$, and $S_{1}^{\prime}=(k~k~0)$.
\end{example}
\hspace{0.5cm} Suppose $X$ is an $E$-module which is represented by an $l\times l$ matrix. Then $X_{i}=e_{i}X$ is both an $R_{i}$-module and also a right $E$-module, and we write
\begin{eqnarray*}
\displaystyle X=\bigoplus_{i=1}^{l}X_{i}.
\end{eqnarray*}
We sometimes identify $X$ with its non-zero rows. 
\begin{example} \normalfont Suppose $l\geq 4$. Then $P_{i}=e_{i}E=E_{i}$ for $1\leq i\leq 4$, and we identify $P_{i}$ with the $i$-th row of $E$, so 
\begin{eqnarray*}
E=\left(\begin{matrix}P_{1}\\P_{2}\\P_{3}\\P_{4}\end{matrix}\right)\text{ and }
P_{1}\oplus P_{3}\oplus P_{4}=\left(\begin{matrix}P_{1}\\0\\P_{3}\\P_{4}\end{matrix}\right)\underbrace{=}_{\text{identified with}}\left(\begin{matrix}P_{1}\\P_{3}\\P_{4}\end{matrix}\right).
\end{eqnarray*}\end{example}
\hspace{0.5cm} A similar identification is used for $E$-maps $f:X\to Y$. For example, we write $f_{i}=e_{i}f$ for the $i$-th row of $f$ and identify $f_{i}$ with its non-zero row, i.e. the $i$-th row of $f$.

\hspace{0.5cm} For any $1\leq i,j\leq l$,
\begin{eqnarray*}
\Hom_{E}(P_{i},P_{j})=\Hom_{E}(e_{i}E,e_{j}E)\cong e_{j}Ee_{i}\subseteq k[[t]].
\end{eqnarray*}
Therefore, any non-zero morphism $P_{i}\to P_{j}$ is of the form $ut^{\alpha}$ for some $\alpha\in\mathbb{N}_{0}$ and $u$ a unit. Adjusting the morphism by multiplication by $u^{-1}$, an automorphism of $P_{j}$, we can assume without loss of generality that the non-zero morphisms from $P_{i}$ to $P_{j}$ are multiplication with some $t^{\alpha}$.

\section{The Functor $\lceil ~\rceil $}\label{The functor lceil rceil}

\begin{defn} \normalfont Given a radical chain (\ref{CHAIN}) and a non-negative integer $a$, we define 
\begin{eqnarray*}
\lceil a\rceil (E)=\End_{R_{1}}(\lceil a\rceil(M)),\text{ where }\lceil a\rceil(M)=\left(\bigoplus_{i=1}^{a}T_{i}\right)\oplus M\text{ with }T_{i}=R_{1},
\end{eqnarray*}
where $\lceil 0\rceil(E)=E$ and $\lceil 0\rceil(M)=M$.
\end{defn} 

$Remark$. Observe that $M$ is not a right (nor left) $E$-module. Also, for any $b\geq 1$ and $a\geq 0$, since $R_{1}\subseteq R_{b}$ we have $\End_{R_{1}}(\lceil a\rceil(M))=\End_{R_{b}}(\lceil a\rceil(M))$.

\hspace{0.5cm} We now define a functor $\lceil ~\rceil$ from the category of right $E$-modules (denoted by $\textbf{Mod}_{E}$) to the category of right $E\lceil a\rceil$-modules (denoted by $\textbf{Mod}_{E\lceil a\rceil}$). If $X$ is an $E$-module, then it can be represented as an $n\times l$ matrix. We define $\lceil a\rceil(X)$ to be the $(n+a)\times (l+a)$ matrix with the following block form:
\begin{eqnarray*}
\lceil a\rceil(X)=\left(\begin{matrix}A_{a\times a}&B_{a\times l}\\C_{n\times a}&X\end{matrix}\right) ,
\end{eqnarray*}
where $A_{a\times a}=X_{11}$ for $1\leq i,j\leq a$, $B_{ij}=X_{1j}$ for $1\leq i\leq a$ and $1\leq j\leq l$, and $C_{ij}=X_{i1}$ for $1\leq i\leq n$ and $1\leq j\leq a$. It follows that $\lceil a\rceil(X)$ is an $\lceil a\rceil(E)$-module. The composition of $\lceil a\rceil$ and $\lceil b\rceil$ is defined to be $\lceil a\rceil\lceil b\rceil:E\to\lceil a\rceil\lceil b\rceil(E)$ given by $X\mapsto \lceil a\rceil\lceil b\rceil(X):=\lceil a\rceil(\lceil b\rceil(X))$, where $\lceil a\rceil\lceil b\rceil(E)=\End_{R_{1}}(\lceil a\rceil\lceil b\rceil(M))$ and $\lceil a\rceil\lceil b\rceil(M)=\lceil a\rceil(\lceil b\rceil(M))$. Given a non-identity $E$-map $f:X\to Y$, the above construction is naturally extended to $f$ to give $\lceil a\rceil(f):\lceil a\rceil(X)\to\lceil a\rceil(Y)$. For an identity map $1_{X}:X\to X$, the matrix representation of $1_{X}$ is the $n\times n$ identity matrix, and we define $\lceil a\rceil(1_{X}):\lceil a\rceil(X)\to\lceil a\rceil(X)$ to be the $(n+a)\times (n+a)$ identity matrix. 

$Remark$. The natural extension of our construction to the $2\times 2$ identity matrix gives
\begin{eqnarray*}
\lceil 1\rceil\left(\left(\begin{matrix}1&0\\0&1\end{matrix}\right)\right)=\left(\begin{matrix}1&1&0\\1&1&0\\0&0&1\end{matrix}\right).
\end{eqnarray*}
But for functor properties to be met we need $\lceil 1\rceil(1_{X})$ to be the identity matrix, so we define it that way.

\hspace{0.5cm} If $X\stackrel{f}\to Y\stackrel{g}\to Z$ are $E$-maps and $1_{X}:X\to X$ is the identity map, then 
\begin{eqnarray*}
&\lceil a\rceil(f)&:\lceil a\rceil(X)\to\lceil a\rceil(Y)\text{ is a }\lceil a\rceil(E)\text{-map}\\&\lceil a\rceil(X)&\stackrel{\lceil a\rceil(f)}\to\lceil a\rceil(Y)\stackrel{\lceil a\rceil(g)}\to\lceil a\rceil(Z)\text{ and }\lceil a\rceil(gf)=\lceil a\rceil(g)\lceil a\rceil(f)\\&\lceil a\rceil(1_{X})&=1_{\lceil a\rceil(X)}.
\end{eqnarray*}
Hence, $\lceil a\rceil:\textbf{Mod}_{E}\to\textbf{Mod}_{\lceil a\rceil(E)}$ is a covariant functors. For $1\leq i\leq l+a$, $(\lceil a\rceil(P))_{i}=e_{i}(\lceil a\rceil(E))$ are the right indecomposable projective $\lceil a\rceil(E)$-modules and $(\lceil a\rceil(S))_{i}=(\lceil a\rceil(P))_{i}/J(\lceil a\rceil (E))$ are the right simple $\lceil a\rceil(E)$-modules. The following proposition gives a connection between the right indecomposable projective (and simple) $E$-modules and the right indecomposable projective (and simple) $\lceil a\rceil(E)$-modules. Similarly, the right indecomposable projective $\lceil a\rceil\lceil b\rceil(E)$-modules are $(\lceil a\rceil\lceil b\rceil(P))_{i}=e_{i}(\lceil a\rceil\lceil b\rceil(E))$, and the right simple $\lceil a\rceil\lceil b\rceil(E)$-modules are $(\lceil a\rceil\lceil b\rceil(S))_{i}=(\lceil a\rceil\lceil b\rceil(P))_{i}/J(\lceil a\rceil\lceil b\rceil(E))$. 

\hspace{0.5cm} Given a radical chain (\ref{CHAIN}), if $R_{i}=R_{j}$ in the radical chain, then the $i$-th and $j$-th row of $E$ are the same, and thus $P_{i}=e_{i}E\cong e_{j}E=P_{j}$. In this case $S_{i}=P_{i}/J(E)\cong P_{j}/J(E)=S_{j}$. Since the first $a+1$ rows of $\lceil a\rceil(E)$ are the same (all corresponding to $R_{1}$ in the beginning of the radical chain), we have $(\lceil a\rceil(P))_{i}\cong (\lceil a\rceil(P))_{j}$ and $(\lceil a\rceil(S))_{i}\cong (\lceil a\rceil(S))_{j}$ for $1\leq i,j\leq a+1$. Furthermore, for $i\geq 2$, the first row of $e_{i}E$ is all zeros, so $\lceil a\rceil(e_{i}E)=e_{i+a}(\lceil a\rceil(E))$. Hence, $\lceil a\rceil(P_{i})=\lceil a\rceil(e_{i}E)=e_{i+a}(\lceil a\rceil(E))=(\lceil a\rceil(P))_{i+a}$. An immediate consequence of this construction and the preceding discussion is the following results which we state as a lemma for future reference.
\begin{lemma}\label{l1} For a radical chain (\ref{CHAIN}) and using the above notation with $a\geq 0$, we have the following:
\\(a) If $X$ and $Y$ are right $E$-modules with $Y\subseteq X$, then $\lceil a\rceil(X/Y)=\lceil a\rceil(X)/\lceil a\rceil(Y)$. 
\\(b) $J(\lceil a\rceil(E))=\lceil a\rceil(J(E))$.
\\(c) $\lceil a\rceil(E)$ and $E$ are Mortia-equivalent, so their module categories are essentially the same.
\\(d) If $b$ is also a non-negative integer, then $\lceil a\rceil\lceil b\rceil(X)=\lceil a+b\rceil(X)=\lceil b\rceil\lceil a\rceil(X)$ for any $E$-module $X$.
\\(e) If $R_{i}=R_{j}$, then $P_{i}\cong P_{j}$. Consequently, $(\lceil a\rceil(P))_{i}\cong (\lceil a\rceil(P))_{j}$ for $1\leq i,j\leq a+1$.
\\(f) If $R_{i}=R_{j}$, then $S_{i}\cong S_{j}$. Consequently, $(\lceil a\rceil(S))_{i}\cong (\lceil a\rceil(S))_{j}$ for $1\leq i,j\leq a+1$.
\\(g) For $i\geq 2$, $\lceil a\rceil(P_{i})=(\lceil a\rceil(P))_{i+a}$.
\\(h) For $i\geq 2$, $\lceil a\rceil(S_{i})=(\lceil a\rceil(S))_{i+a}$.
\\(i) $\lceil a\rceil(P_{1})=\displaystyle\bigoplus_{i=1}^{a+1}(\lceil a\rceil(P))_{i}$. Consequently, $\lceil a\rceil(S_{1})=\displaystyle\bigoplus_{i=1}^{a+1}(\lceil a\rceil(S))_{i}$.
\\(j) $\displaystyle\lceil a\rceil\left(\bigoplus_{i=1}^{c}Q_{i}\right)=\bigoplus_{i=1}^{c}\lceil a\rceil(Q_{i})$, where $Q_{i}\in\lbrace P_{1},P_{2},P_{3},\ldots,P_{l}\rbrace$.
\\(k) $(\lceil a\rceil\lceil b\rceil(P))_{a+b+i}=\lceil a\rceil\lceil b\rceil(P_{i})$ and $(\lceil a\rceil\lceil b\rceil(S))_{a+b+i}=\lceil a\rceil\lceil b\rceil(S_{i})$.
\end{lemma}
\hspace{0.5cm} When $a=1$, the module $\lceil 1\rceil(E)$ has the following matrix block form;
\begin{eqnarray*}
\lceil 1\rceil(E) &=&\left(\begin{matrix}R_{1}&M^{\ast}\\(\Hom_{R_{1}}(R_{1},M))^{T}&E\end{matrix}\right)=\left(\begin{matrix}R_{1}&M^{\ast}\\M^{T}&E\end{matrix}\right),
\end{eqnarray*}
where $M^{\ast}=\Hom_{R_{1}}(M,R_{1})=\displaystyle\Hom_{R_{1}}\left(\bigoplus_{i=1}^{l}R_{i},R_{1}\right)\cong \bigoplus_{i=1}^{l}\Hom_{R_{1}}(R_{i},R_{1})$ and
\begin{eqnarray*}
M^{T}=\left(\begin{matrix}R_{1}\\R_{2}\\\vdots\\R_{l}\end{matrix}\right).
\end{eqnarray*}

\begin{example}\label{ex1} \normalfont Let $R_{1}=k[[t^{3},t^{4},t^{5}]],~R_{2}=k[[t^{2},t^{3}]]$, $R_{3}=k[[t]]$ and let $m_{1},~m_{2}$, and $m_{3}$ be their maximal ideals, respectively. If $M=R_{1}\oplus R_{2}\oplus R_{3}$ and $E=\End_{R_{1}}(M)$, then
\begin{eqnarray*}
E=\left(\begin{matrix}R_{1}&m_{1}&m_{1}\\R_{2}&R_{2}&m_{2}\\R_{3}&R_{3}&R_{3}\end{matrix}\right),~\lceil 1\rceil(E)=\left(\begin{matrix}R_{1}&M^{\ast}\\M^{T}&E\end{matrix}\right),\text{ where }M^{\ast}=\left(\begin{matrix}R_{1}&m_{1}&m_{1}\end{matrix}\right)\text{ and }M^{T}=\left(\begin{matrix}R_{1}\\R_{2}\\R_{3}\end{matrix}\right).
\end{eqnarray*}
It follows that
\begin{eqnarray*}
J(E)&=&\left(\begin{matrix}m_{1}&m_{1}&m_{1}\\R_{2}&m_{2}&m_{2}\\R_{3}&R_{3}&m_{3}\end{matrix}\right),~J(\lceil 1\rceil(E))=\left(\begin{matrix}m_{1}&m_{1}&m_{1}&m_{1}\\m_{1}&m_{1}&m_{1}&m_{1}\\R_{2}&R_{2}&m_{2}&m_{2}\\R_{3}&R_{3}&R_{3}&m_{3}\end{matrix}\right)=\lceil 1\rceil(J(E)).
\end{eqnarray*}
This gives us\begin{small}
\begin{eqnarray*}
(\lceil 1\rceil(P))_{1}\oplus(\lceil 1\rceil(P))_{2}&=&\left(\begin{matrix}R_{1}&R_{1}&m_{1}&m_{1}\\0&0&0&0\\0&0&0&0\\0&0&0&0\end{matrix}\right)\oplus\left(\begin{matrix}0&0&0&0\\R_{1}&R_{1}&m_{1}&m_{1}\\0&0&0&0\\0&0&0&0\end{matrix}\right)=\left(\begin{matrix}R_{1}&R_{1}&m_{1}&m_{1}\\R_{1}&R_{1}&m_{1}&m_{1}\\0&0&0&0\\0&0&0&0\end{matrix}\right)=(\lceil 1\rceil(P_{1})),\\(\lceil 1\rceil(P))_{3}&=&\left(\begin{matrix}0&0&0&0\\0&0&0&0\\R_{2}&R_{2}&R_{2}&m_{2}\\0&0&0&0\end{matrix}\right)=\lceil 1\rceil(P_{2}),\\(\lceil 1\rceil(S))_{1}\oplus(\lceil 1\rceil(S))_{2}&=&\left(\begin{matrix}k&k&0&0\\0&0&0&0\\0&0&0&0\\0&0&0&0\end{matrix}\right)\oplus\left(\begin{matrix}0&0&0&0\\k&k&0&0\\0&0&0&0\\0&0&0&0\end{matrix}\right)=\left(\begin{matrix}k&k&0&0\\k&k&0&0\\0&0&0&0\\0&0&0&0\end{matrix}\right)=\lceil 1\rceil(S_{1})\\(\lceil 1\rceil(S))_{3}&=&\left(\begin{matrix}0&0&0&0\\0&0&0&0\\0&0&k&0\\0&0&0&0\end{matrix}\right)=\lceil 1\rceil(S_{2})\\(\lceil 1\rceil(P_{2}))\oplus (\lceil 1\rceil(P_{3}))&=&\left(\begin{matrix}0&0&0&0\\0&0&0&0\\R_{2}&R_{2}&R_{2}&m_{2}\\0&0&0&0\end{matrix}\right)\oplus\left(\begin{matrix}0&0&0&0\\0&0&0&0\\0&0&0&0\\R_{3}&R_{3}&R_{3}&R_{3}\end{matrix}\right)=\left(\begin{matrix}0&0&0&0\\0&0&0&0\\R_{2}&R_{2}&R_{2}&m_{2}\\R_{3}&R_{3}&R_{3}&R_{3}\end{matrix}\right)=\lceil 1\rceil(P_{2}\oplus P_{3}).
\end{eqnarray*}\end{small}
The maps $\pi_{1}:P_{1}\to S_{1}$, $\lceil 1\rceil(\pi_{1}):\lceil 1\rceil(P_{1})\to\lceil 1\rceil(S_{1})$, $(\lceil 1\rceil(\pi))_{1}:(\lceil 1\rceil(P))_{1}\to\lceil 1\rceil(S))_{1}$ are all quotient maps (but of course, all distinct). To see that $\lceil 1\rceil(\pi_{1})$ and $(\lceil 1\rceil(\pi))_{1}$ are distinct, the former map is quotient by $m_{1}$ across the first two rows and the zero map every where else. While the latter map is quotient by $m_{1}$ in the first row and the zero map in every where else (also one can observe they have distinct domains and co-domains).
\end{example} 
Many of the proofs below are done by identifying modules with their non-zero row(s), the map $\pi_{i}$ with its action on the $i$-th row of $P_{i}$, and all other maps with the matrix obtained from removing their zero row(s) and column(s). We give an example illustrating why this is so useful.
\begin{example}\label{id} \normalfont Let $R_{1},~R_{2}$, and $R_{3}$ be the rings given in example \ref{ex1}. Then we have the following exact sequence, $0\leftarrow S_{2}\stackrel{\pi_{2}}\longleftarrow P_{2}\stackrel{\gamma}\longleftarrow P_{1}\oplus P_{3}\stackrel{\delta}\longleftarrow P_{3}\leftarrow 0$. Writing the modules and maps as matrices gives;
\begin{eqnarray*}
0\leftarrow \left(\begin{matrix}0&0&0\\0&k&0\\0&0&0\end{matrix}\right)\stackrel{\pi_{2}}\longleftarrow \left(\begin{matrix}0&0&0\\R_{2}&R_{2}&m_{2}\\0&0&0\end{matrix}\right)\stackrel{\gamma}\longleftarrow\left(\begin{matrix}R_{1}&m_{1}&m_{1}\\0&0&0\\R_{3}&R_{3}&R_{3}\end{matrix}\right)\stackrel{\delta}\longleftarrow\left(\begin{matrix}0&0&0\\0&0&0\\R_{3}&R_{3}&R_{3}\end{matrix}\right)\leftarrow 0,
\end{eqnarray*}
where 
\begin{eqnarray*}
\gamma=\left(\begin{matrix}0&0&0\\1&0&t^{2}\\0&0&0\end{matrix}\right),\delta=\left(\begin{matrix}0&0&t^{3}\\0&0&0\\0&0&-t\end{matrix}\right).
\end{eqnarray*}
We identify the above modules and maps to the following modules and maps, respectively;
\begin{eqnarray*}
0\leftarrow \left(\begin{matrix}0&k&0\end{matrix}\right)\stackrel{\pi_{2}}\longleftarrow \left(\begin{matrix}R_{2}&R_{2}&m_{2}\end{matrix}\right)\stackrel{\gamma}\longleftarrow\left(\begin{matrix}R_{1}&m_{1}&m_{1}\\R_{3}&R_{3}&R_{3}\end{matrix}\right)\stackrel{\delta}\longleftarrow\left(\begin{matrix}R_{3}&R_{3}&R_{3}\end{matrix}\right)\leftarrow 0,
\end{eqnarray*}
where $\pi_{2}$ is identified with its action on the second row of $P_{2}$, $\gamma=\left(\begin{matrix}1&t^{2}\end{matrix}\right),\delta=\left(\begin{matrix}t^{3}\\-t\end{matrix}\right),\ker\pi_{2}=\left(\begin{matrix}R_{2}&m_{2}&m_{2}\end{matrix}\right)=J(P_{2})$. If we apply $\lceil 1\rceil$ to the above exact sequence we get
\begin{eqnarray}\label{not a complex}
0\leftarrow \lceil 1\rceil(S_{2})\stackrel{\lceil 1\rceil(\pi_{2})}\longleftarrow \lceil 1\rceil(P_{2})\stackrel{\lceil 1\rceil(\gamma)}\longleftarrow\lceil 1\rceil(P_{1}\oplus P_{3})\stackrel{\lceil 1\rceil(\delta)}\longleftarrow\lceil 1\rceil(P_{3})\leftarrow 0,
\end{eqnarray}
where under this identification 
\begin{eqnarray*}
\lceil 1\rceil(S_{2})&=&\left(\begin{matrix}0&0&k&0\end{matrix}\right),\lceil 1\rceil(P_{2})=\left(\begin{matrix}R_{2}&R_{2}&R_{2}&m_{2}\end{matrix}\right), \lceil 1\rceil(P_{3})=\left(\begin{matrix}R_{3}&R_{3}&R_{3}&R_{3}\end{matrix}\right)\\\lceil 1\rceil(P_{1}\oplus P_{3})&=&\left(\begin{matrix}R_{1}&R_{1}&m_{1}&m_{1}\\R_{1}&R_{1}&m_{1}&m_{1}\\R_{3}&R_{3}&R_{3}&R_{3}\end{matrix}\right),\lceil 1\rceil(\gamma)=\left(\begin{matrix}1&1&t^{2}\end{matrix}\right),~\lceil 1\rceil(\delta)=\left(\begin{matrix}t^{3}\\t^{3}\\-t\end{matrix}\right).
\end{eqnarray*}
Notice that (\ref{not a complex}) is not even a complex let alone exact. It is true that $\ker(\lceil 1\rceil(\pi_{2}))=\left(\begin{matrix}R_{2}&R_{2}&m_{2}&m_{2}\end{matrix}\right)=\text{Im}(\lceil 1\rceil(\gamma))$, however, $\text{Im}(\lceil 1\rceil(\delta))\nsubseteq\ker(\lceil 1\rceil(\gamma)$. However, if $P_{1}$ does not appear in the exact sequence of $S_{i}$, then exactness is preserved by $\lceil a\rceil$. For example, we have the following exact sequence (up to identification);
\begin{eqnarray*}
0\leftarrow \left(\begin{matrix}0&0&k\end{matrix}\right)\stackrel{\pi_{3}}\longleftarrow \left(\begin{matrix}R_{3}&R_{3}&R_{3}\end{matrix}\right)\stackrel{(1~t)}\longleftarrow\left(\begin{matrix}R_{2}&R_{2}&m_{2}\\R_{3}&R_{3}&R_{3}\end{matrix}\right)\stackrel{\left(\begin{matrix}t^{2}\\-t\end{matrix}\right)}\longleftarrow\left(\begin{matrix}R_{3}&R_{3}&R_{3}\end{matrix}\right)\leftarrow 0,
\end{eqnarray*}
and applying $\lceil 1\rceil$ to this sequence gives the following exact sequence,
\begin{eqnarray*}
0\leftarrow \left(\begin{matrix}0&0&0&k\end{matrix}\right)\stackrel{\lceil 1\rceil(\pi_{3})}\longleftarrow \left(\begin{matrix}R_{3}&R_{3}&R_{3}&R_{3}\end{matrix}\right)\stackrel{(1~t)}\longleftarrow\left(\begin{matrix}R_{2}&R_{2}&R_{2}&m_{2}\\R_{3}&R_{3}&R_{3}&R_{3}\end{matrix}\right)\stackrel{\left(\begin{matrix}t^{2}\\-t\end{matrix}\right)}\longleftarrow\left(\begin{matrix}R_{3}&R_{3}&R_{3}&R_{3}\end{matrix}\right)\leftarrow 0.
\end{eqnarray*}
\end{example}
\hspace{0.5cm} It is well known that a projective resolution
\begin{eqnarray*}
0\leftarrow M\stackrel{\varepsilon}\longleftarrow Q_{0}\stackrel{d_{1}}\longleftarrow Q_{1}\stackrel{d_{2}}\longleftarrow \ldots \stackrel{d_{n}}\longleftarrow Q_{n}\leftarrow 0
\end{eqnarray*}
is minimal if and only if $\text{Im}(d_{i})\subseteq J(Q_{i-1})$ (the Jacobson radical of $P_{i}$) for $i=1,2,\ldots,n$ and $P_{0}\stackrel{\varepsilon}\to M\to 0$ is a projective cover. The construction of the functor $\lceil a\rceil$ and the preceding statements proves the following useful proposition.
\begin{proposition}\label{exactness} Given a radical chain (\ref{CHAIN}), suppose 
\begin{eqnarray}\label{mod}
0\leftarrow M\stackrel{\varepsilon}\longleftarrow L_{0}\stackrel{d_{1}}\longleftarrow L_{1}\stackrel{d_{2}}\longleftarrow \ldots \stackrel{d_{n}}\longleftarrow L_{n}\leftarrow 0
\end{eqnarray}
is an exact sequence of $E$-modules. 
\\(a) If $P_{1}$ is not a direct summand of $L_{i}$ for $i=0,1,2,\ldots,n$, then for any non-negative integer $a$ we have the following exact sequence:
\begin{eqnarray*}
0\leftarrow \lceil a\rceil(M)\stackrel{\lceil a\rceil(\varepsilon)}\longleftarrow \lceil a\rceil(L_{0})\stackrel{\lceil a\rceil(d_{1})}\longleftarrow \lceil a\rceil(L_{1})\stackrel{\lceil a\rceil(d_{2})}\longleftarrow \ldots \stackrel{\lceil a\rceil(d_{n})}\longleftarrow \lceil a\rceil(L_{n})\leftarrow 0
\end{eqnarray*}
(b) For a fixed $1\leq i\leq l$, if $M=S_{i},~L_{0}=P_{i}$, and $\varepsilon=\pi_{i}$ in (\ref{mod}), then for any non-negative integer $a$ we have $\lceil a\rceil(\ker\pi_{i})=\ker(\lceil a\rceil(\pi_{i}))$ and $\lceil a\rceil(\text{Im}(d_{1}))=\text{Im}(\lceil a\rceil(d_{1}))$. In particular, $\ker(\lceil a\rceil(\pi_{i}))=\text{Im}(\lceil a\rceil(d_{1}))$ for any non-negative integer $a$.
\\(c) Given $1\leq j\leq n$, if $L_{j-1}$ and $L_{j}$ are direct sum of indecomposable right $E$-modules and $P_{1}$ is not a direct summand of $L_{j-1}$ nor of $L_{j}$ and $\text{Im}(d_{j})\subseteq J(L_{j-1})$, then for any non-negative integer $a$ we have $\lceil a\rceil\text{Im}(d_{j})=\text{Im}(\lceil a\rceil(d_{j}))\subseteq J(\lceil a\rceil(L_{j-1}))$. Moreover, under the identification in Example \ref{id}, $\lceil a\rceil(d_{j})=d_{j}$.   
\\(d) Consequently, if (\ref{mod}) is a minimal projective resolution of $M=S_{i}$, where $i\geq 2$, and $P_{1}$ is not a direct summand of $L_{j}$ for $0\leq j\leq n$, then 
\begin{eqnarray*}
0\leftarrow \lceil a\rceil(S_{i})\stackrel{\lceil a\rceil(\pi_{i})}\longleftarrow \lceil a\rceil(L_{0})\stackrel{\lceil a\rceil(d_{1})}\longleftarrow \lceil a\rceil(L_{1})\stackrel{\lceil a\rceil(d_{2})}\longleftarrow \ldots \stackrel{\lceil a\rceil(d_{n})}\longleftarrow \lceil a\rceil(L_{n})\leftarrow 0
\end{eqnarray*}
is a minimal $\lceil a\rceil(E)$-projective resolution of $\lceil a\rceil(S_{i})$.
\end{proposition}

\section{Family of Starting Rings}\label{Family of Starting Rings}

\hspace{0.5cm} Fix an even integer $n\geq 6$, and pick an integer $\frac{3n}{2}+1\leq a\leq 2n-1$. Define 
\begin{eqnarray*}
A_{n}^{a}(1)=\left\lbrace 0,n,\frac{3n}{2}+w;w=0,1,2,\ldots ,n-1\right\rbrace\text{~(this ring only depends on n)},
\end{eqnarray*}
and for each natural number $i\geq 2$, define
\begin{eqnarray*}
A_{n}^{a}(i)=\lead \left\lbrace 0,\dfrac{jn}{2},a+1+(i-2)\dfrac{n}{2}+w;j=2,3,\ldots ,i+1,~w=0,1,\ldots ,n-1\right\rbrace .
\end{eqnarray*}
Let $\mathcal{H}_{n}^{a}(i)$ be the numerical semigroup generated by $A_{n}^{a}(i)$, i.e. $\mathcal{H}_{n}^{a}(i)=\langle A_{n}^{a}(i)\rangle$. Notice that $F(\mathcal{H}_{n}^{a}(i))=F(\mathcal{H}_{n}^{a}(i-1))+\frac{n}{2}$ for each natural number $i\geq 3$. When $a$ and $n$ are understood, we write $A(i)$ and $\mathcal{H}(i)$ for $A_{n}^{a}(i)$ and $\mathcal{H}_{n}^{i}$, respectively. In this case, we let $R^{i}=R(\mathcal{H}(i)))$. For each $i\in \mathbb{N}$, we construct a radical chain starting from $R^{i}$:
\begin{eqnarray}\label{radical chain of family of starting rings}
R^{i}=R^{i}_{1}\subseteq R^{i}_{2}\subseteq \ldots \subseteq R^{i}_{l_{i}}=k[[t]],
\end{eqnarray}
and we call $\mathcal{F}(n,a):=\lbrace R^{i}:i\in\mathbb{N}\rbrace$ a family of starting rings. We define
\begin{eqnarray*}
E^{i}=\End_{R^{i}_{1}}(M^{i}),~\text{where}~M^{i}=\bigoplus_{j=1}^{l_{i}}R^{i}_{j}.
\end{eqnarray*}
The indecomposable projective right $E^{i}$-modules are denoted by $P^{i}_{1},P^{i}_{2},\ldots,P^{i}_{l_{i}}$, i.e. $P^{i}_{j}=e_{j}E^{i}$. Similarly, the simple right $E^{i}$-modules are denoted by $S^{i}_{1},S^{i}_{2},\ldots,S^{i}_{l_{i}}$, i.e. $S^{i}_{j}=P^{i}_{j}/J(E^{i})$. By Theorem \ref{t1.3.1}, $2\leq \gldim(E^{i})\leq l_{i}$. It should be noted that different constructions of the radical chain (\ref{radical chain of family of starting rings}) give rise to different $E^{i}$. So for each $i$, we must first decide which construction to apply to get the radical chain (\ref{radical chain of family of starting rings}). The following notation will be very useful throughout the remainder of this paper.
\begin{notation} Let $\varepsilon =a+1-\frac{3n}{2},~\varepsilon_{1}=a+1-n,~\varepsilon_{2}=a+1-\frac{n}{2}$, $\zeta =(t^{n}~t^{\frac{3n}{2}})$, and
\begin{eqnarray*}
\tau =\left ( \begin{array}{cc}
t^{\frac{3n}{2}}&t^{2n}\\
-t^{n}&-t^{\frac{3n}{2}}\end{array} \right) :=\left ( \begin{array}{c}
\tau_{1}\\
\tau_{2}\end{array} \right),~\phi =\left ( \begin{array}{c}
t^{\varepsilon_{1}}\\
-t^{\varepsilon}\end{array} \right) ,\eta =\left( \begin{array}{c}
t^{\varepsilon_{2}}\\
-t^{\varepsilon_{1}}\end{array} \right) ,~\sigma =\left( \begin{array}{c}
t^{\frac{3n}{2}}\\
-t^{n}\end{array} \right) ,~\mu =\left( \begin{array}{c}t^{\frac{n}{2}}\\-1\end{array} \right)
\end{eqnarray*}\end{notation}

\subsection{Constructing Endomorphism Rings of Large Global Dimension}\label{Constructing Endomorphism Rings of Large Global Dimension.}

Throughout this section, we assume the radical chain (\ref{radical chain of family of starting rings}), the module $M^{i}$, and the ring $E^{i}$ are constructed via the lazy construction for each $i\in \mathbb{N}$. Observe that $R^{1}_{1}=R^{2}_{a+1-\frac{3n}{2}}$, and $R^{i+1}_{j+\frac{n}{2}-1}=R^{i}_{j}$ for $i\geq 2$ and $1\leq j\leq l_{i}$. The following proposition is a direct consequence of this observation. 
\begin{proposition}\label{p2.4.2} Using the notation introduced at the beginning of this section,
\\(a) $l_{1}=\dfrac{3n}{2}-1$, $l_{2}=a-2$, and $l_{i+1}=l_{i}+\dfrac{n}{2}-1$ for $i\geq 2$.
\\(b) For all $i\geq 2$ and $2\leq j\leq l_{i}$, we have 
\begin{eqnarray*}
\left(\left\lceil\frac{n}{2}-1\right\rceil(P^{i})\right)_{j+\frac{n}{2}-1}&=&\left\lceil \frac{n}{2}-1 \right\rceil(P^{i}_{j})=P_{j+\frac{n}{2}-1}^{i+1},\\\left(\left\lceil\frac{n}{2}-1\right\rceil(S^{i})\right)_{j+\frac{n}{2}-1}&=&\left \lceil \frac{n}{2}-1\right \rceil(S^{i}_{j})\cong S^{i+1}_{j+\frac{n}{2}-1}\text{ as }E^{i+1}\text{-modules},
\end{eqnarray*}
where $\left(\left\lceil\frac{n}{2}-1\right\rceil(P^{i})\right)_{j+\frac{n}{2}-1}$ and $\left(\left\lceil\frac{n}{2}-1\right\rceil(S^{i})\right)_{j+\frac{n}{2}-1}$ are the $j+\frac{n}{2}-1$ indecomposable projective and simple right $\left\lceil\frac{n}{2}-1\right\rceil(E^{i})$-modules, respectively.
\end{proposition}
Unless otherwise stated in the calculations below we are using the identification used in Example \ref{id}. A quick (but tedious) calculation proves the following proposition. 
\begin{proposition}\label{p2.4.3} (a) The minimal $E^{1}$-projective resolution of $S_{j}^{1}$ are as follows:
\begin{eqnarray*}
0&\leftarrow &S_{1}^{1}\stackrel{\pi_{1}^{1}}\longleftarrow P_{1}^{1}\stackrel{t^{n}}\longleftarrow P_{n}^{1}\leftarrow 0,\\0&\leftarrow &S_{j}^{1}\stackrel{\pi_{j}^{1}}\longleftarrow P_{j}^{1}\stackrel{(1~t^{n})}\longleftarrow P_{j-1}^{1}\oplus P_{n+j-1}^{1}\stackrel{\left( \begin{matrix}t^{n}\\-1\end{matrix}\right)}\longleftarrow P_{n+j-2}^{1}\leftarrow 0~\text{for}~2\leq j\leq \frac{n}{2}\\0&\leftarrow &S_{j}^{1}\stackrel{\pi_{j}^{1}}\longleftarrow P_{i}^{1}\stackrel{(1~t^{\frac{3n}{2}-j})}\longleftarrow P_{j-1}^{1}\oplus P_{\frac{3n}{2}-1}^{1}\stackrel{\left( \begin{matrix}t^{\frac{3n}{2}-j+1}\\-t\end{matrix}\right)}\longleftarrow P_{\frac{3n}{2}-1}^{1}\leftarrow 0~\text{for}~\frac{n}{2}+1\leq j\leq \frac{3n}{2}-1=l_{1}.
\end{eqnarray*}  
In particular, $\gldim(E^{1})=2$.
\\(b) The minimal $E^{2}$-projective resolutions of $S_{1}^{2}$ is 
\begin{eqnarray*}
0\leftarrow S_{1}^{2}\stackrel{\pi_{1}^{2}}\longleftarrow P_{1}^{2}\stackrel{\zeta}\longleftarrow P_{n-1}^{2}\oplus P_{\frac{3n}{2}-1}^{2}\stackrel{\phi}\longleftarrow P_{l_{2}}^{2}\leftarrow 0.
\end{eqnarray*} 
In particular, $\Pd_{E^{2}}(S_{1}^{2})=2$.
\\(c) If $q\geq 1$, then any non-zero row of $\lceil \frac{3n}{2}-3\rceil(J(P_{1}^{3q-1}))$ is the same, and denote any such row by $N^{3q-1}$. Moreover, $N^{3q-1}$ is an $E^{3q+2}$-module and
\begin{eqnarray*}
0\leftarrow S_{1}^{3q+2}\stackrel{\pi_{1}^{3q+2}}\longleftarrow P_{1}^{3q+2}\stackrel{\zeta}\longleftarrow P_{n-1}^{3q+2}\oplus P_{\frac{3n}{2}-2}^{3q+2}\stackrel{\mu}\longleftarrow N^{3q-1}\leftarrow 0.
\end{eqnarray*}
is an exact sequence of $E^{3q+2}$-modules.
\end{proposition}

Now we are in position to prove the first main result. 

\begin{thm}\label{t2.4.1} If $q\geq 0$, we have the following: 
\begin{eqnarray*}
0\leftarrow S_{1}^{3q+2}\stackrel{d_{0}}\longleftarrow W_{0}\stackrel{d_{1}}\longleftarrow W_{1}\stackrel{d_{2}}\longleftarrow W_{2}\stackrel{d_{3}}\longleftarrow \cdots \stackrel{d_{q+1}}\longleftarrow W_{q+1}\stackrel{d_{q+2}}\longleftarrow W_{q+2}\leftarrow 0
\end{eqnarray*}
is a minimal $E^{3q+2}$-projective resolution for $S_{1}^{3q+2}$, where
\begin{eqnarray*}
W_{j}&=&\begin{cases}P_{1}^{3q+2}&~\text{if}~j=0\\\\P^{3q+2}_{(n-1)+3(j-1)(\frac{n}{2}-1)}\oplus P^{3q+2}_{(n-1)+3(j-1)(\frac{n}{2}-1)+(\frac{n}{2}-1)}&~\text{if}~j=1,2,\ldots ,q\\\\P_{(n-1)+3q(\frac{n}{2}-1)}^{3q+2}\oplus P^{3q+2}_{(n-1)+3q(\frac{n}{2}-1)+\frac{n}{2}}&~\text{if}~j=q+1\\\\P^{3q+2}_{l_{3q+2}}&~\text{if}~j=q+2\end{cases}\\d_{j}&=&\begin{cases}\pi_{1}^{3q+2}&~\text{if}~j=0\\\zeta &~\text{if}~j=1\\\tau &~\text{if}~j=2,\ldots ,q+1\\\phi &~\text{if}~j=q+2\end{cases}
\end{eqnarray*}
In particular, $\Pd_{E^{3q+2}}(S_{1}^{3q+2})=q+2$ for $q\in \mathbb{N}_{0}$. Therefore, $q+2\leq \gldim(E^{3q+2})\leq l_{3q+2}$ for $q\in \mathbb{N}_{0}$.\end{thm}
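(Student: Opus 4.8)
The statement to prove is that the displayed complex is a minimal projective resolution of $S_1^{3q+2}$, whence $\Pd_{E^{3q+2}}(S_1^{3q+2}) = q+2$ and the sandwich $q+2 \le \gldim(E^{3q+2}) \le l_{3q+2}$ follows at once from Theorem \ref{t1.3.1} (upper bound) together with the fact that the global dimension of $E^{3q+2}$ is the supremum of the projective dimensions of its simple modules. So the entire content is to verify that the complex $W_\bullet$ is exact, consists of projectives, and is minimal (all differentials have image in the radical). I would proceed by induction on $q$, using the functor $\lceil\ \rceil$ as the engine that passes from $E^{3q-1}$ to $E^{3q+2}$, since Proposition \ref{p2.4.2}(c) tells us $P_j^{3q-1}\lceil \tfrac{n}{2}-1 \rceil = P_{j+\frac n2 - 1}^{3q+2}$ and $S_j^{3q-1}\lceil \tfrac n2 - 1\rceil = S_{j + \frac n2 - 1}^{3q+2}$ for $j\ge 2$.

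\textbf{Base case.} For $q=0$ the claimed resolution reads $0 \leftarrow S_1^2 \leftarrow P_1^2 \xleftarrow{\zeta} P_{n-1}^2 \oplus P_{\frac{3n}{2}-1}^2 \xleftarrow{\phi} P_{l_2}^2 \leftarrow 0$, which is exactly Lemma \ref{p2.4.4}(a); so the base case is already done. (One should double-check the indexing in $W_{q+1}$ when $q=0$: it becomes $P_{n-1}^2 \oplus P_{\frac{3n}{2}}^2$, which must be reconciled with $P_{\frac{3n}{2}-1}^2$ — I expect this is handled by the convention on which occurrence of a repeated ring is named, or by noting $\frac{3n}{2}-1 = l_1$ coincidences; this is a small bookkeeping point to pin down.)

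\textbf{Inductive step.} Assume the resolution for $S_1^{3q-1}$. The strategy is: apply the functor $\lceil \tfrac{3n}{2}-3 \rceil$ — note $\tfrac{3n}{2}-3 = 3(\tfrac n2 - 1)$, so this is iterating $\lceil \tfrac n2 - 1\rceil$ three times, i.e. moving three steps up the family — to (the tail of) the resolution of $S_1^{3q-1}$, then graft Lemma \ref{p2.4.4}(b) onto the front. Concretely: Lemma \ref{shift}(b),(d) and (e) show that applying $\lceil a\rceil$ to an exact complex of $E$-modules whose relevant rows vanish yields an exact complex of $E\lceil a\rceil$-modules, and Lemma \ref{shift}(a),(f) guarantee minimality is preserved (images stay in the radical). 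Under $\lceil\tfrac{3n}{2}-3\rceil$, each projective $P_j^{3q-1}$ with $j\ge 2$ becomes $P_{j + \frac{3n}{2}-3}^{3q+2}$ by Proposition \ref{p2.4.2}(c) applied three times, and the differentials $\tau,\phi$ are preserved verbatim since they are of matrix form and $\lceil\ \rceil$ copies the first column. This converts the length-$(q+1)$ resolution of $S_1^{3q-1}$ into a length-$(q+1)$ exact complex of projectives over $E^{3q+2}$ ending at the correct $W_{q+2} = P_{l_{3q+2}}^{3q+2}$ (using $l_{3q+2} = l_{3q-1} + \frac{3n}{2}-3$ from iterating Proposition \ref{p2.4.2}(a)) and resolving $S_{1 + \frac{3n}{2}-3}^{3q+2}$ rather than $S_1^{3q+2}$. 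The syzygy of $S_1^{3q+2}$ at the top — i.e. $\ker((\pi_1^{3q+2})^\ast)$, which is $(J(P_1^{3q+2}))^\ast$ — is identified by Lemma \ref{p2.4.4}(b): it is the image of $\zeta$ from $W_1 = P_{n-1}^{3q+2}\oplus P_{\frac{3n}{2}-2}^{3q+2}$, and the kernel of that $\zeta$ is generated by $\mu$ landing from $N^{3q-1}$, a non-zero row of $(J(P_1^{3q-1}))\lceil \frac{3n}{2}-3\rceil$. The final task is to check that $N^{3q-1}$ — the second syzygy of $S_1^{3q+2}$ — coincides, as a submodule with its inclusion map, with the image of $\tau$ out of $W_2$ in the shifted resolution, i.e. that $\mathrm{coker}(\mu) = (J(P_1^{3q-1}))\lceil\frac{3n}{2}-3\rceil / (\text{something})$ splices correctly onto the shifted complex so that consecutive differentials compose to zero and exactness holds at the splice point. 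This amounts to matching $\mu$ against the next differential of the inherited complex (which is $\tau$, since for $q\ge 1$ the resolution of $S_1^{3q-1}$ has $d_2 = \tau$), i.e. verifying $\tau \mu$-type compositions vanish and the relevant kernel equals the relevant image.

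\textbf{Main obstacle.} The genuinely delicate point is the splice: reconciling Lemma \ref{p2.4.4}(b)'s partial data (it gives the first two syzygies of $S_1^{3q+2}$ and names the second syzygy $N^{3q-1}$ as a row of a shifted radical) with the inductively-shifted resolution of $S_1^{3q-1}$, and checking these agree on the nose — both the modules $W_j$ (the index arithmetic $(n-1)+3(j-1)(\frac n2 -1)$ must be exactly what you get by shifting the indices $(n-1)+3(j-2)(\frac n2 - 1)$ of the $q-1$ case by $\frac{3n}{2}-3$) and the maps ($\tau,\phi$ literally unchanged). I would also need to confirm the hypotheses of Lemma \ref{shift}(d) — that the first rows of the relevant modules vanish — hold throughout, which should follow because every $W_j$ for $j\ge 1$ is a sum of $P_m^{3q+2}$ with $m\ge n-1 > 1$, so its first row is zero; and similarly that applying $\lceil\ \rceil$ to $S_1^{3q-1}$ with its nonzero first row is avoided by only shifting the truncated complex $W_{\ge 1}$ and re-attaching the top two terms by hand. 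Once the splice is checked, minimality is automatic from Lemma \ref{shift}(a),(f) plus the observation that each $d_j^\ast$ (being $\zeta$, $\tau$, or $\phi$, all with entries in positive powers of $t$) has image in the radical, and the numerical conclusion $q+2 \le \gldim(E^{3q+2}) \le l_{3q+2}$ is immediate.
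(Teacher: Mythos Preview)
Your approach is essentially identical to the paper's: induction on $q$ with base case Lemma~\ref{p2.4.4}(a), the inductive step applying $\lceil \tfrac{3n}{2}-3\rceil$ to the truncated resolution of $S_1^{3q-1}$ via Lemma~\ref{shift} and Proposition~\ref{p2.4.2}(c), then splicing onto the front piece from Lemma~\ref{p2.4.4}(b) using the identity $\tau=\mu\zeta$, with minimality from Lemma~\ref{shift}(a),(f). Your base-case worry is just an arithmetic slip: $(n-1)+3\cdot 0\cdot(\tfrac{n}{2}-1)+\tfrac{n}{2}=\tfrac{3n}{2}-1$, not $\tfrac{3n}{2}$, so the formula for $W_{q+1}$ at $q=0$ already matches Lemma~\ref{p2.4.4}(a) on the nose.
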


\begin{proof} We proceed by induction on $q$. The case $q=0$ is Proposition \ref{p2.4.3} (b). Assume the result holds for $q-1$ (with $q\geq 1$). By Proposition \ref{p2.4.3} (c), the following sequence of $E^{3q+2}$-modules is exact
\begin{eqnarray}\label{eq.t2}
\begin{tikzpicture}
  \matrix (m) [matrix of math nodes,row sep=3em,column sep=2em,minimum width=2em]
  {
      0 & S_{1}^{3q+2}& P_{1}^{3q+2} & P_{n-1}^{3q+2}\oplus P_{\frac{3n}{2}-2}^{3q+2} & N^{3q-1} & 0, \\};
  \path[-stealth]
    (m-1-2) edge node [left] {$ $} (m-1-1)
    (m-1-3) edge node [above] {$\pi_{1}^{3q+2}$} (m-1-2)
    (m-1-4) edge node [above] {$\zeta$} (m-1-3)
    (m-1-5) edge node [above] {$\mu$} (m-1-4)
    (m-1-6) edge node [left] {$ $} (m-1-5);
\end{tikzpicture}
\end{eqnarray}
where $N^{3q-1}$ is any non-zero row of $\lceil \frac{3n}{2}-3\rceil(J(P_{1}^{3q-1}))$. By induction, $\Pd_{E^{3q-1}}(S_{1}^{3q-1})=(q-1)+2=q+1$ (since $S_{1}^{3(q-1)+2}=S_{1}^{3q-1}$) and 
\begin{eqnarray}\label{eq.t3}\begin{tikzpicture}
  \matrix (m) [matrix of math nodes,row sep=2em,column sep=2em,minimum width=2em]
  {
      0 & S_{1}^{3q-1} & L_{0} & L_{1} & L_{2} & \cdots & L_{q} & L_{q+1} & 0, \\};
  \path[-stealth]
    (m-1-2) edge node [left] {$ $} (m-1-1)
    (m-1-3) edge node [above] {$f_{0}$} (m-1-2)
    (m-1-4) edge node [above] {$f_{1}$} (m-1-3)
    (m-1-5) edge node [above] {$f_{2}$} (m-1-4)
    (m-1-6) edge node [above] {$f_{3}$} (m-1-5)
    (m-1-7) edge node [above] {$f_{q}$} (m-1-6)
    (m-1-8) edge node [above] {$f_{q+1}$} (m-1-7)
    (m-1-9) edge node [above] {$ $} (m-1-8);
\end{tikzpicture}\end{eqnarray}
is a minimal $E^{3q-1}$-projective resolution of $S_{1}^{3q-1}$, where 
\begin{eqnarray*}
L_{j}&=&\begin{cases}P_{1}^{3q-1}&~\text{if}~j=0\\\\P^{3q-1}_{(n-1)+3(j-1)(\frac{n}{2}-1)}\oplus P^{3q-1}_{(n-1)+3(j-1)(\frac{n}{2}-1)+(\frac{n}{2}-1)}&~\text{if}~j=1,2,...,q-1\\\\P_{(n-1)+3(q-1)(\frac{n}{2}-1)}^{3q-1}\oplus P^{3q-1}_{(n-1)+3(q-1)(\frac{n}{2}-1)+\frac{n}{2}}&~\text{if}~j=q\\\\P^{3q-1}_{l_{3q-1}}&~\text{if}~j=q+1\end{cases}\\f_{j}&=&\begin{cases}\pi_{1}^{3q-1}&~\text{if}~j=0\\\zeta &~\text{if}~j=1\\\tau &~\text{for}~j=2,...,q\\\phi &~\text{if}~j=q+1\end{cases}
\end{eqnarray*}
Since $\Im (f_{1})=\ker (f_{0})=J(P_{1}^{3q-1})$, the exact sequence in (\ref{eq.t3}) yields the following exact sequence of $E^{3q-1}$-modules:
\begin{eqnarray}\label{eq.t4}\begin{tikzpicture}
  \matrix (m) [matrix of math nodes,row sep=2em,column sep=2em,minimum width=2em]
  {
      0 & J(P_{1}^{3q-1}) & L_{1} & \cdots & L_{q} & L_{q+1} & 0. \\};
  \path[-stealth]
    (m-1-2) edge node [left] {$ $} (m-1-1)
    (m-1-3) edge node [above] {$f_{1}$} (m-1-2)
    (m-1-4) edge node [above] {$f_{2}$} (m-1-3)
    (m-1-5) edge node [above] {$f_{q}$} (m-1-4)
    (m-1-6) edge node [above] {$f_{q+1}$} (m-1-5)
    (m-1-7) edge node [above] {$ $} (m-1-6);
\end{tikzpicture}\end{eqnarray}

\noindent Observe that $P_{1}^{3q-1}$ is not a direct summand of $L_{j}$ for $j=1,2,\ldots,q+1$, so applying $\lceil\frac{3n}{2}-3\rceil$ to the exact sequence (\ref{eq.t4}) and using Proposition \ref{exactness}(a) gives the following exact sequence of $\lceil\frac{3n}{2}-3\rceil(E^{3q-1})$-modules: 
\begin{eqnarray}\begin{tikzpicture}
  \matrix (m) [matrix of math nodes,row sep=3em,column sep=2em,minimum width=1em]
  {
     0 & \left \lceil \frac{3n}{2}-3\right \rceil(J(P_{1}^{3q-1})) & \left \lceil \frac{3n}{2}-3\right \rceil(L_{1}) & \cdots & \left \lceil \frac{3n}{2}-3\right \rceil(L_{q}) & \left \lceil \frac{3n}{2}-3\right \rceil(L_{q+1}) & 0, \\};
  \path[-stealth]
    (m-1-2) edge node [left] {$ $} (m-1-1)
    (m-1-3) edge node [above] {$g_{1}$} (m-1-2)
    (m-1-4) edge node [above] {$g_{2}$} (m-1-3)
    (m-1-5) edge node [above] {$g_{q}$} (m-1-4)
    (m-1-6) edge node [above] {$g_{q+1}$} (m-1-5)
    (m-1-7) edge node [above] {$ $} (m-1-6); 
\end{tikzpicture}\end{eqnarray}
where $h_{1}=\left(\begin{matrix}f_{1}\\\vdots\\f_{1}\end{matrix}\right)=\left(\begin{matrix}\zeta\\\vdots\\\zeta\end{matrix}\right)$ is a $(\lceil\frac{3n}{2}-3\rceil)\times 1$ matrix, and 
\begin{eqnarray*}
g_{j}=\left\lceil \dfrac{3n}{2}-3\right\rceil(f_{j})=\begin{cases}\left(\begin{matrix}h_{1}\\f_{1}\end{matrix}\right)&\text{if}j=1\\f_{j}&\text{if}j=2,3,\ldots, q+1\end{cases}=\begin{cases}\left(\begin{matrix}h_{1}\\\zeta\end{matrix}\right)&\text{ if }j=1\\\tau&\text{if}~j=2,3,\ldots ,q\\\phi&\text{if}~j=q+1\end{cases}.
\end{eqnarray*}
This gives the following exact sequence of $\lceil\frac{3n}{2}-3\rceil(E^{3q-1})$-modules:
\begin{eqnarray}\label{eq.t5}\begin{tikzpicture}
  \matrix (m) [matrix of math nodes,row sep=3em,column sep=2em,minimum width=1em]
  {
     0 & N^{3q-1} & \left \lceil \frac{3n}{2}-3\right \rceil(L_{1}) & \cdots & \left\lceil \frac{3n}{2}-3\right \rceil(L_{q})  & \left\lceil \frac{3n}{2}-3\right \rceil(L_{q+1}) & 0, \\};
  \path[-stealth]
    (m-1-2) edge node [left] {$ $} (m-1-1)
    (m-1-3) edge node [above] {$\zeta$} (m-1-2)
    (m-1-4) edge node [above] {$\tau$} (m-1-3)
    (m-1-5) edge node [above] {$\tau$} (m-1-4)
    (m-1-6) edge node [above] {$\phi$} (m-1-5)
    (m-1-7) edge node [above] {$ $} (m-1-6); 
\end{tikzpicture}\end{eqnarray}
For $j=1,2,...,q-1$, Lemma \ref{l1}(j) and Proposition \ref{p2.4.2}(b) yields
\begin{eqnarray*}
\left\lceil \frac{3n}{2}-3\right \rceil(L_{j})&=&\left \lceil \frac{3n}{2}-3\right \rceil\left(P^{3q-1}_{n-1+3(j-1)(\frac{n}{2}-1)} \oplus P_{n-1+3(j-1)(\frac{3n}{2}-1)+(\frac{n}{2}-1)}^{3q-1}\right)\\&=&\left \lceil \frac{3n}{2}-3\right \rceil (P^{3q-1}_{n-1+3(j-1)(\frac{n}{2}-1)})\oplus \left \lceil \frac{3n}{2}-3\right \rceil (P_{n-1+3(j-1)(\frac{3n}{2}-1)+(\frac{n}{2}-1)}^{3q-1})\\&=&P^{3q+2}_{n-1+3j(\frac{n}{2}-1)}\oplus P^{3q+2}_{n-1+3j(\frac{n}{2}-1)+(\frac{n}{2}-1)}.
\end{eqnarray*}
A similar computation shows that 
\begin{eqnarray*}
\left\lceil \frac{3n}{2}-3\right \rceil(L_{q})=P^{3q+2}_{(n-1)+3q(\frac{n}{2}-1)}\oplus P^{3q+2}_{n-1+3q(\frac{n}{2}-1)+\frac{n}{2}}~\text{and}~\left\lceil \frac{3n}{2}-3\right \rceil(L_{q+1})=P_{l_{3q+2}}^{3q+2}.
\end{eqnarray*}
Since $N^{3q-1}$ is an $E^{3q+2}$-module and $\lceil\frac{3n}{2}-3\rceil(L_{j})$ for $1\leq j\leq q+1$ are projective $E^{3q+2}$-modules, splicing the exact sequences (\ref{eq.t2}) and (\ref{eq.t5}) yields the following $E^{3q+2}$-projective resolution of $S_{1}^{3q+2}$:
\begin{eqnarray*}\label{eq.t6}\begin{tikzpicture}
  \matrix (m) [matrix of math nodes,row sep=2em,column sep=2em,minimum width=1em]
  {
     0 & S_{1}^{3q+2}& P_{1}^{3q+2} & P_{n-1}^{3q+2}\oplus P_{\frac{3n}{2}-2}^{3q+2} & L_{1}\left \lceil \frac{3n}{2}-3\right \rceil & \cdots & \\ & & &  0 & L_{q+1}\left \lceil\frac{3n}{2}-3\right \rceil & L_{q}\left \lceil \frac{3n}{2}-3\right \rceil \\};
  \path[-stealth]
    (m-1-2) edge node [left] {$ $} (m-1-1)
    (m-1-3) edge node [above] {$\pi_{1}^{3q+2}$} (m-1-2)
    (m-1-4) edge node [above] {$\zeta$} (m-1-3)
    (m-1-5) edge node [above] {$\tau=\mu\zeta$} (m-1-4)
    (m-1-6) edge node [above] {$\tau$} (m-1-5)
    (m-2-6) edge node [left] {$\tau$} (m-1-6)
    (m-2-5) edge node [above] {$\phi$} (m-2-6)
    (m-2-4) edge node [above] {$ $} (m-2-5); 
\end{tikzpicture}\end{eqnarray*}
In particular,
\begin{eqnarray}\label{eq.t7}\begin{tikzpicture}
  \matrix (m) [matrix of math nodes,row sep=2em,column sep=2em,minimum width=2em]
  {
      0 & S_{1}^{3q+2} & W_{0} & W_{1} & W_{2} & \cdots & W_{q+1} & W_{q+2} & 0 \\};
  \path[-stealth]
    (m-1-2) edge node [left] {$ $} (m-1-1)
    (m-1-3) edge node [above] {$d_{0}$} (m-1-2)
    (m-1-4) edge node [above] {$d_{1}$} (m-1-3)
    (m-1-5) edge node [above] {$d_{2}$} (m-1-4)
    (m-1-6) edge node [above] {$d_{3}$} (m-1-5)
    (m-1-7) edge node [above] {$d_{q+1}$} (m-1-6)
    (m-1-8) edge node [above] {$d_{q+2}$} (m-1-7)
    (m-1-9) edge node [above] {$ $} (m-1-8);
\end{tikzpicture}\end{eqnarray}
is an $E^{3q+2}$-projective resolution of $S_{1}^{3q+2}$, where
\begin{eqnarray*}
W_{j}&=&\begin{cases}P_{1}^{3q+2}&\text{if}~j=0\\\\P_{n-1}^{3q+2}\oplus P_{\frac{3n}{2}-2}^{3q+2}&\text{if}~j=1\\\\\lceil\frac{3n}{2}-3\rceil(L_{j-1})&~\text{if}~j=2,...,q+2\end{cases}\\&=&\begin{cases}P_{1}^{3q+2}&~\text{if}~j=0\\\\P^{3q+2}_{(n-1)+3(j-1)(\frac{n}{2}-1)}\oplus P^{3q+2}_{(n-1)+3(j-1)(\frac{n}{2}-1)+(\frac{n}{2}-1)}&~\text{if}~j=1,2,\ldots ,q\\\\P_{(n-1)+3q(\frac{n}{2}-1)}^{3q+2}\oplus P^{3q+2}_{(n-1)+3q(\frac{n}{2}-1)+\frac{n}{2}}&~\text{if}~j=q+1\\\\P^{3q+2}_{l_{3q+2}}&~\text{if}~j=q+2\end{cases}
\end{eqnarray*}
and
\begin{eqnarray*}
d_{j}=\begin{cases}\pi_{1}^{3q+2}&\text{if}~j=0\\\zeta&\text{if}~j=1\\\tau&\text{if}~j=2\\g_{j-1}&\text{if}~j=3,\ldots ,q+2\end{cases}=\begin{cases}\pi_{1}^{3q+2}&\text{if}~j=0\\\zeta &\text{if}~j=1\\\tau &\text{if}~j=2,3,\ldots ,q+1\\\phi &\text{if}~j=q+2\end{cases}
\end{eqnarray*}
By Theorem \ref{t1.1.1}, $0\leftarrow S_{1}^{3q+2}\stackrel{d_{0}}\longleftarrow P_{1}^{3q+2}$ is a projective cover for $S_{1}^{3q+2}$. Moreover, $\Im (d_{1})=\ker d_{0}=J(W_{0})=J(P_{1}^{3q+2})$, and a quick calculation shows that $\Im(d_{2})\subseteq J(W_{1})$. Minimality of (\ref{eq.t3}) implies that \begin{eqnarray*}
\Im \left( L_{j}\stackrel{f_{j}}\longrightarrow L_{j-1}\right) \subseteq J(L_{j-1})~\text{for}~1\leq j\leq q+1.
\end{eqnarray*}
Since $P_{1}$ is not a direct summand of $L_{i}$ for $1\leq i\leq q+1$, Proposition \ref{exactness}(c) yields
\begin{eqnarray*}
\Im (d_{j+1})=\Im (g_{j})=\Im\left(\left\lceil \dfrac{3n}{2}-3\right\rceil(f_{j})\right)\subseteq J\left(\left \lceil \frac{3n}{2}-3\right \rceil(L_{j-1})\right)=J(W_{j})\text{ for }2\leq j\leq q+1.
\end{eqnarray*}
Hence, (\ref{eq.t7}) is a minimal projective resolution for $S_{1}^{3q+2}$, as desired. The second part is a consequence of what we just proved.\end{proof}

The following theorem covers the cases when $i$ is congruent to zero or 1 mod 3 (proofs are similar to the one given in Theorem \ref{t2.4.1}).

\begin{thm}\label{t2.4.3} (a) If $q\geq 1$, then 
\begin{eqnarray*}
0\leftarrow S_{1}^{3q}\stackrel{d_{0}}\longleftarrow W_{0}\stackrel{d_{1}}\longleftarrow W_{1}\stackrel{d_{2}}\longleftarrow W_{2}\stackrel{d_{3}}\longleftarrow \cdots \stackrel{d_{q}}\longleftarrow W_{q}\stackrel{d_{q+1}}\longleftarrow W_{q+1}\leftarrow 0
\end{eqnarray*}
is a minimal projective resolution for $S_{1}^{3q}$, where
\begin{eqnarray*}
W_{j}&=&\begin{cases}P_{1}^{3q}&~\text{if}~j=0\\\\P^{3q}_{(n-1)+3(j-1)(\frac{n}{2}-1)}\oplus P^{3q}_{(n-1)+3(j-1)(\frac{n}{2}-1)+(\frac{n}{2}-1)}&~\text{if}~j=1,2,\ldots ,q\\\\P^{3q}_{l_{3q}}&~\text{if}~j=q+1\end{cases}\\d_{j}&=&\begin{cases}\pi_{1}^{3q}&~\text{if}~j=0\\\zeta &~\text{if}~j=1\\\tau &~\text{if}~j=2,\ldots ,q\\\eta &~\text{if}~j=q+1\end{cases}
\end{eqnarray*}
In particular, $\Pd_{E^{3q}}(S_{1}^{3q})=q+1$ for $q\in \mathbb{N}$. Therefore, $q+1\leq\gldim(E^{3q})\leq l_{3q}$ for $q\in \mathbb{N}$.

(b) If $q\geq 1$, then 
\begin{eqnarray*}
0\leftarrow S_{1}^{3q+1}\stackrel{d_{0}}\longleftarrow W_{0}\stackrel{d_{1}}\longleftarrow W_{1}\stackrel{d_{2}}\longleftarrow W_{2}\stackrel{d_{3}}\longleftarrow \cdots \stackrel{d_{q}}\longleftarrow W_{q}\stackrel{d_{q+1}}\longleftarrow W_{q+1}\leftarrow 0
\end{eqnarray*}
is a minimal projective resolution for $S_{1}^{3q+1}$, where
\begin{eqnarray*}
W_{j}&=&\begin{cases}P_{1}^{3q+1}&~\text{if}~j=0\\\\P^{3q+1}_{(n-1)+3(j-1)(\frac{n}{2}-1)}\oplus P^{3q+1}_{(n-1)+3(j-1)(\frac{n}{2}-1)+(\frac{n}{2}-1)}&~\text{if}~j=1,2,\ldots ,q\\\\P^{3q+1}_{l_{3q+1-(\varepsilon -1)}}&~\text{if}~j=q+1\end{cases}\\d_{j}&=&\begin{cases}\pi_{1}^{3q+1}&~\text{if}~j=0\\\zeta &~\text{if}~j=1\\\tau &~\text{if}~j=2,\ldots ,q\\\sigma &~\text{if}~j=q+1\end{cases}
\end{eqnarray*}
In particular, $\Pd_{E^{3q+1}}(S_{1}^{3q+1})=q+1$ for $q\in \mathbb{N}_{0}$. Therefore, $q+1\leq\gldim(E^{3q+1})\leq l_{3q+1}$ for $q\in \mathbb{N}_{0}$.\end{thm}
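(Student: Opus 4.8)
\emph{Proof plan.} The plan is to run, for each residue class of $i$ modulo $3$, exactly the induction on $q$ that proves Theorem~\ref{t2.4.1}, changing only the start of the induction and the last map of the resolution. The structural input is Proposition~\ref{p2.4.2}: from $A(i+1)_{j+\frac{n}{2}-1}=A(i)_{j}$ for $i\geq 2$ one gets $l_{i+3}=l_{i}+\frac{3n}{2}-3$, and iterating Proposition~\ref{p2.4.2}(c) three times gives $P^{i}_{j}\lceil\frac{3n}{2}-3\rceil=P^{i+3}_{j+\frac{3n}{2}-3}$ together with $S^{i}_{j}\lceil\frac{3n}{2}-3\rceil=S^{i+3}_{j+\frac{3n}{2}-3}$ for $i\geq 2$ and $2\leq j\leq l_{i}$. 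Hence passing from $E^{i}$ to $E^{i+3}$ is realized by the functor $\lceil\frac{3n}{2}-3\rceil$ --- the same shift used in Theorem~\ref{t2.4.1} --- and the residue of $i$ modulo $3$ is preserved, so the two assertions can be handled by separate inductions.

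First I would settle the base cases directly, as in Lemma~\ref{p2.4.4}(a). For part (a) the base is $q=1$: writing out the relevant ideals $A(3)_{1},A(3)_{n-1},A(3)_{\frac{3n}{2}-2},A(3)_{l_{3}}$ and checking that every differential lands in the radical shows that $0\leftarrow S_{1}^{3}\stackrel{\pi_{1}^{3}}\longleftarrow P_{1}^{3}\stackrel{\zeta}\longleftarrow P_{n-1}^{3}\oplus P_{\frac{3n}{2}-2}^{3}\stackrel{\eta}\longleftarrow P_{l_{3}}^{3}\leftarrow 0$ is the minimal resolution, so $\Pd_{E^{3}}(S_{1}^{3})=2$. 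For part (b) the case $q=0$ (that is, $S_{1}^{1}$) is already contained in Lemma~\ref{p2.4.3}, which gives $\Pd=1$, and the case $q=1$ (that is, $S_{1}^{4}$) is the $q=1$ instance of the displayed resolution in (b), verified by a similar direct computation. In the same computation I would record the analogue of Lemma~\ref{p2.4.4}(b): for $q\geq 2$ the sequence $0\leftarrow (S_{1}^{3q})^{\ast}\stackrel{(\pi_{1}^{3q})^{\ast}}\longleftarrow (P_{1}^{3q})^{\ast}\stackrel{\zeta}\longleftarrow (P_{n-1}^{3q}\oplus P_{\frac{3n}{2}-2}^{3q})^{\ast}\stackrel{\mu}\longleftarrow N^{3q-3}\leftarrow 0$ is exact with $\zeta$ mapping onto $(J(P_{1}^{3q}))^{\ast}$, where $N^{3q-3}$ is any non-zero row of $(J(P_{1}^{3q-3}))\lceil\frac{3n}{2}-3\rceil$, together with the analogous statement for $S_{1}^{3q+1}$.

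For the inductive step, fix $q\geq 2$ and assume the asserted minimal resolution of $S_{1}^{3(q-1)}$ (I treat case (a); case (b) is identical with $3(q-1)+1$ throughout). Deleting the projective cover gives a minimal resolution of $J(P_{1}^{3(q-1)})$; applying $\lceil\frac{3n}{2}-3\rceil$ and invoking Lemma~\ref{shift} yields, exactly as in Theorem~\ref{t2.4.1}, a complex that is exact except over the first syzygy, and restricting to non-zero rows turns it into an exact sequence starting from $N^{3q-3}$. Splicing this with the three-term start above via $\tau=\mu\zeta$, and then identifying each shifted term through $P^{3(q-1)}_{j}\lceil\frac{3n}{2}-3\rceil=P^{3q}_{j+\frac{3n}{2}-3}$ and $l_{3q}=l_{3(q-1)}+\frac{3n}{2}-3$, produces the sequence displayed in the theorem, with the last map $\eta$ (respectively $\sigma$) carried over unchanged from the base resolution because the functor fixes these matrices. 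Minimality is verified as in Theorem~\ref{t2.4.1}: $d_{0}$ is a projective cover by Theorem~\ref{t1.1.1}; $\Im d_{1}=J(W_{0})$ by the three-term start; $\Im d_{2}\subseteq J(W_{1})$ from $\ker\zeta\subseteq (J(W_{1}))^{\ast}$; and $\Im d_{j}\subseteq J(W_{j-1})$ for $j\geq 3$ by minimality of the $(q-1)$-resolution together with Lemma~\ref{shift}(a). Hence $\Pd_{E^{3q}}(S_{1}^{3q})=q+1$, respectively $\Pd_{E^{3q+1}}(S_{1}^{3q+1})=q+1$, and Theorem~\ref{t1.3.1} then gives $q+1\leq\gldim(E^{3q})\leq l_{3q}$, respectively the corresponding bound for $E^{3q+1}$.

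The inductive bookkeeping is routine once Theorem~\ref{t2.4.1} is available; the genuinely new content is the base cases and the three-term start, and I expect this to be the main obstacle. There one must verify that the first two syzygies of $S_{1}^{3q}$ and of $S_{1}^{3q+1}$ are precisely $\zeta$ and $\mu$ (equivalently $\tau$), and that the resolution then stops after one further projective via $\eta$, respectively $\sigma$, rather than continuing; these facts are sensitive to the position of $a$ inside $[\frac{3n}{2}+1,2n-1]$, to the Frobenius numbers $F(A(i))$, and to the gap structure of the $A(i)$ near the normalization $k[[t]]$ --- the last point being exactly why the somewhat unusual index $l_{3q+1-(\varepsilon-1)}$ shows up in $W_{q+1}$ in case (b) and must be checked separately for that residue class.
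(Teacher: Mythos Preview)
Your proposal is correct and follows essentially the same approach as the paper: the paper states only that the proofs are similar to that of Theorem~\ref{t2.4.1}, and your plan faithfully carries out that adaptation, running the same induction via the functor $\lceil\frac{3n}{2}-3\rceil$ and Lemma~\ref{shift}, with the only changes being the base cases and the terminal map ($\eta$ or $\sigma$ in place of $\phi$). Your identification of the base computations and the analogue of Lemma~\ref{p2.4.4}(b) as the genuine new content is exactly right.
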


\subsection{Constructing Endomorphism Rings of Global Dimension Two.}\label{third main theorem}

Throughout this section, we assume the radical chain (\ref{radical chain of family of starting rings}), the module $M^{i}$, and the ring $E^{i}$ are constructed via the greedy construction for each $i\in \mathbb{N}$. Observe that $R^{1}_{1}=R(\mathcal{H}(1))$, where $\mathcal{H}(1)=\langle A(1)\rangle$ and $\displaystyle A(1)=\left\lbrace 0,n,\dfrac{3n}{2}+w:w=0,1,\ldots,n-1\right\rbrace$. Moreover, $R^{1}_{2}=R\left(\left\langle\left\lbrace 0,\dfrac{n}{2}+w:w=0,1,...,\dfrac{n}{2}-1\right\rbrace\right\rangle\right)$ and $R^{1}_{3}=k[[t]]$. For $i\geq 2$, $R^{i}_{l_{i}}=k[[t]]$ and $R^{i}_{1}=R(\mathcal{H}(i))$, where $\mathcal{H}(i)=\langle A(i)\rangle$ and 
\begin{eqnarray*}
A(i)=\displaystyle\left\lbrace  0,\frac{bn}{2},a+1+(i-2)\dfrac{n}{2}+w:b=2,3,\ldots ,i+1,~w=0,1,\ldots,n-1\right\rbrace .
\end{eqnarray*}
Moreover, $R^{i}_{l_{i}-1}=\displaystyle R\left(\left\langle\left\lbrace 0,a+1-\frac{3n}{2}+w:w=0,1,\ldots,\dfrac{n}{2}-1\right\rangle\right\rbrace\right)$, and for $2\leq j\leq l_{i}-2$
\begin{eqnarray*}
R^{i}_{j}=R\left(\left\langle\left\lbrace \frac{bn}{2},a+1+(i-j-2)\dfrac{n}{2}+w:b=0,1,2,\ldots ,i-j+1,w=0,1,\ldots ,\dfrac{n}{2}-1\right\rbrace\right\rangle\right).
\end{eqnarray*} 
\hspace{0.5cm} The second main result of this paper is that $\gldim(E^{i})=2$ for all $i\in \mathbb{N}$ (Theorem \ref{t3.3.1}). Firstly, an immediate consequence of this construction is the following results which we state as a proposition for future reference.
\begin{proposition}\label{p3.3} Fix $i\in \mathbb{N}$. For a radical chain (\ref{radical chain of family of starting rings}), the following holds.
\\(a) For each $i\in\mathbb{N}$ we have $l_{i}=i+2$. Consequently, $l_{i+1}=l_{i}+1$.
\\(b) Identify $P^{i}_{j}$ with its non-zero row and let $(P^{i}_{j})_{b}$ be the $b$-th entry in $P^{i}_{j}$. Then, 
\begin{eqnarray*}
(P_{j}^{i})_{b}=(E^{i})_{jb}=\begin{cases}R^{i}_{j,0}&\text{if}~1\leq b\leq j\\R^{i}_{j,b-j}&\text{if}~j+1\leq b\leq l_{i}\end{cases}.
\end{eqnarray*}
(c) For each integer $i\geq 1$, $1\leq j\leq l_{i}$, and $1\leq b\leq i-j+2$, we have $R^{i}_{j,b}=t^{e(R^{i}_{j})}R^{i}_{j+1,b-1}$.
\\(d) For $i\geq 3$ and $3\leq j\leq l_{i}$ we have $R^{i}_{j}=R^{i-1}_{j-1}$. In particular, $\lceil 1\rceil (P_{j-1}^{i-1})=P_{j}^{i},~\lceil 1\rceil (J(P^{i-1}_{j-1}))=J(P^{i}_{j}),~\text{and}~\lceil 1\rceil (S_{j-1}^{i-1})\cong S^{i}_{j}$ (as $E^{i}$-modules).
\end{proposition}
\begin{example} \normalfont Setting $n=6,~a=10$, and $i=2$ gives $R_{1}^{2}=k[[t^{6},t^{9},t^{11},t^{13},t^{14},t^{16}]]$. Then $R_{2}^{2}=\End_{R_{1}}(m_{1})=k[[t^{3},t^{5},t^{7}]]$, $R_{3}^{2}=k[[t^{2},t^{3}]]$, and $R_{4}^{2}=\End_{R_{1}}(m_{2})=k[[t]]$. Moreover,
\begin{eqnarray*}
E=\left(\begin{matrix}R^{2}_{1,0}&R^{2}_{1,1}&R^{2}_{1,2}&R^{2}_{1,3}\\R^{2}_{2,0}&R^{2}_{2,0}&R^{2}_{2,1}&R^{2}_{2,2}\\R^{2}_{3,0}&R^{2}_{3,0}&R^{2}_{3,0}&R^{2}_{3,1}\\R^{2}_{4,0}&R^{2}_{4,0}&R^{2}_{4,0}&R^{2}_{4,0}\end{matrix}\right),
\end{eqnarray*}
and
\begin{eqnarray*}
R^{2}_{1,1}&=&t^{6}R^{2}_{2,0},~R^{2}_{1,2}=t^{6}R^{2}_{2,1},~R^{2}_{1,3}=t^{6}R^{2}_{2,2},\\R^{2}_{2,1}&=&t^{3}R^{2}_{3,0},~R^{2}_{2,2}=t^{3}R^{2}_{3,1},\\R^{2}_{3,1}&=&t^{2}R^{2}_{4,0}.
\end{eqnarray*}\end{example}
\hspace{0.5cm} Unless otherwise stated in the calculation below we are using the identification used in Example \ref{id}. We now prove that the projective dimension of the first simple module is always one.
\begin{lemma}\label{f.simple} For each $i\in \mathbb{N}$, the minimal projective resolution of $S_{1}^{i}$ is 
\begin{eqnarray*}
0\longleftarrow S_{1}^{i}\stackrel{\pi_{1}^{i}}\longleftarrow P_{1}^{i}\stackrel{t^{n}}\longleftarrow P_{2}^{i}\longleftarrow 0.
\end{eqnarray*} 
In particular, $\Pd_{E^{i}}(S_{1}^{i})=1$ for all $i\in \mathbb{N}$.\end{lemma}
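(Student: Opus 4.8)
The plan is to verify directly that the displayed two-term sequence is exact and minimal, using only Lemma~\ref{p3.3} together with the description of $J(E^i)$ from Section~\ref{Projective section}. By Theorem~\ref{t1.1.1}, $(P_1^i,\pi_1^i)$ is a projective cover of $S_1^i$, so $\ker\pi_1^i=J(P_1^i)$, which is the first row of $J(E^i)$. In the greedy construction all rings of the radical chain~(\ref{radical chain of family of starting rings}) are distinct, so the formula for $J(E^i)$ gives $(J(P_1^i)^\ast)_1=m_1$ and $(J(P_1^i)^\ast)_b=(E^i)_{1b}$ for $2\le b\le l_i$. Combining this with Lemma~\ref{p3.3}(b), and noting $m_1=A(i)_{1,1}=(E^i)_{1,2}$, we obtain $J(P_1^i)^\ast=(m_1,\ m_1,\ A(i)_{1,2},\ \ldots,\ A(i)_{1,l_i-1})$.

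Next I would pin down the morphism $f^i\colon P_2^i\to P_1^i$. By the remark following Lemma~\ref{single}, every nonzero $E^i$-morphism $P_2^i\to P_1^i$ is, up to a unit, multiplication by some $t^{\alpha}$, and since $P_2^i$ is a torsion-free module over the domain $R_1$, any such map is injective. The claim is that the relevant exponent is $\alpha=n$. To see this I would first check that $e(A(i)_1)=n$ for every $i\in\mathbb{N}$: for $i=1$ this is immediate from $A(1)_1=\lead\lbrace 0,n,\tfrac{3n}{2}\rbrace$, and for $i\ge 2$ the value $n=\tfrac{2n}{2}$ lies in $A(i)_1$ by Lemma~\ref{p3.3}(d), while every other listed element (the $\tfrac{bn}{2}$ with $b\ge 3$, and $a+1+(i-2)\tfrac n2\ge a+1\ge\tfrac{3n}{2}+2$) is at least $\tfrac{3n}{2}>n$, so $n$ is the multiplicity. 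Then Lemma~\ref{p3.3}(c) with $j=1$ gives $A(i)_{1,b}=t^{n}A(i)_{2,b-1}$ for $1\le b\le\mathcal{G}(A(i)_1)=l_i-1$. Using Lemma~\ref{p3.3}(b) to write $(P_2^i)^\ast=(A(i)_2,\ A(i)_2,\ A(i)_{2,1},\ \ldots,\ A(i)_{2,l_i-2})$ and taking $f^i=t^{n}$, a term-by-term comparison yields
\begin{eqnarray*}
(\Im f^i)^\ast=(t^{n}A(i)_2,\ t^{n}A(i)_2,\ t^{n}A(i)_{2,1},\ \ldots,\ t^{n}A(i)_{2,l_i-2})=(A(i)_{1,1},\ A(i)_{1,1},\ A(i)_{1,2},\ \ldots,\ A(i)_{1,l_i-1}),
\end{eqnarray*}
which is exactly $J(P_1^i)^\ast=\ker(\pi_1^i)^\ast$.

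Consequently the sequence $0\leftarrow S_1^i\stackrel{\pi_1^i}{\longleftarrow}P_1^i\stackrel{f^i}{\longleftarrow}P_2^i\leftarrow 0$ is exact, and it is minimal: $f^i$ is injective with image $J(P_1^i)=\ker\pi_1^i$, so it maps $P_2^i$ isomorphically onto $\ker\pi_1^i$, exhibiting $(P_2^i,f^i)$ as a projective cover of the kernel. Since $P_2^i\ne 0$ (as $l_i=i+2\ge 3$) and $P_1^i$ is indecomposable, $S_1^i$ is not projective, whence $\Pd_{E^i}(S_1^i)=1$ for all $i\in\mathbb{N}$; alternatively, the lower bound $\Pd_{E^i}(S_1^i)\ge 1$ is Proposition~\ref{p1.3.6}(1). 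There is no genuine obstacle here; the only points requiring care are matching the shifted indices in the rows of $E^i$ and $J(E^i)$ correctly and verifying that $e(A(i)_1)=n$ uniformly in $i$, which is precisely where the hypothesis $\tfrac{3n}{2}+1\le a\le 2n-1$ is used.
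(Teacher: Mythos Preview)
Your proof is correct and follows essentially the same approach as the paper: both compute $\ker(\pi_1^i)^\ast=J(P_1^i)^\ast$ entrywise via Lemma~\ref{p3.3}(b), invoke $e(A(i)_1)=n$, and then use Lemma~\ref{p3.3}(c) to identify this kernel with $t^{n}(P_2^i)^\ast$. The only difference is that you spell out the verification of $e(A(i)_1)=n$ and the minimality/injectivity in more detail than the paper does.
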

\begin{proof} Notice that $e(R^{i}_{1})=n$ for all $i\in \mathbb{N}$. By Lemma \ref{p3.3} (b), $(P^{i}_{1})_{b}=R^{i}_{1,b-1}~\text{for}~1\leq b\leq l_{i}$. It follows that
\begin{eqnarray*}
(\ker (\pi_{1}^{i}))_{b}=(J(P^{i}_{1}))_{b}=\begin{cases}R^{i}_{1,1}&\text{if}~b=1\\R^{i}_{1,b-1}&\text{if}~2\leq b\leq l_{i}\end{cases}=\begin{cases}t^{n}R^{i}_{2,0}&\text{if}~b=1\\t^{n}R^{i}_{2,b-2}&\text{if}~2\leq b\leq l_{i}\end{cases}=t^{n}P_{2}^{i}.
\end{eqnarray*}
where $\ker(\pi_{1}^{i})=J(P^{i}_{1})$ is identified with its first row and $(\ker (\pi_{1}^{i}))_{b}=(J(P^{i}_{1}))_{b}$ is the $b$-th entry.\end{proof}

A similar calculation to the one given in Lemma \ref{f.simple} proves the following lemma. 

\begin{lemma}\label{resolution} (a) $\gldim(E^{1})=\gldim(E^{2})=2$.
\\(b) The minimal projective resolution of $S_{2}^{i},~S_{3}^{i},~S_{l_{i}-1}^{i},$ and $S_{l_{i}}^{i}$ are as follows:
\begin{eqnarray*}
\begin{tikzpicture}
  \matrix (m) [matrix of math nodes,row sep=2em,column sep=2em,minimum width=2em]
  {
    for~all~i\in \mathbb{N},~  0 & S_{2}^{i} & P_{2}^{i} & P_{1}^{i}\oplus P_{3}^{i} & P_{2}^{i} & 0,\\};
  \path[-stealth]
    (m-1-2) edge node [left] {$ $} (m-1-1)
    (m-1-3) edge node [above] {$\pi_{2}^{i}$} (m-1-2)
    (m-1-4) edge node [above] {$(1~t^{\frac{n}{2}})$} (m-1-3)
    (m-1-5) edge node [above] {$\left(\begin{matrix}t^{n}\\-t^{\frac{n}{2}}\end{matrix}\right)$} (m-1-4)
    (m-1-6) edge node [left] {$ $} (m-1-5);
\end{tikzpicture}
\\
\begin{tikzpicture}
  \matrix (m) [matrix of math nodes,row sep=2em,column sep=2em,minimum width=2em]
  {
     \text{for}~i\geq 3,~ 0 & S_{3}^{i} & P_{3}^{i} & P_{2}^{i}\oplus  P_{4}^{i} & P_{3}^{i} & 0,\\};
  \path[-stealth]
    (m-1-2) edge node [left] {$ $} (m-1-1)
    (m-1-3) edge node [above] {$\pi_{3}^{i}$} (m-1-2)
    (m-1-4) edge node [above] {$(1~t^{\frac{n}{2}})$} (m-1-3)
    (m-1-5) edge node [above] {$\left(\begin{matrix}t^{\frac{n}{2}}\\-1\end{matrix}\right)$} (m-1-4)
    (m-1-6) edge node [left] {$ $} (m-1-5);
\end{tikzpicture}
\\\begin{tikzpicture}
  \matrix (m) [matrix of math nodes,row sep=2em,column sep=2em,minimum width=2em]
  {
    \text{for}~i\geq 2,~  0 & S_{l_{i}-1}^{i} & P_{l_{i}-1}^{i} & P_{l_{i}-2}^{i}\oplus P_{l_{i}}^{i} & P_{l_{i}-1}^{i} & 0,\\};
  \path[-stealth]
    (m-1-2) edge node [left] {$ $} (m-1-1)
    (m-1-3) edge node [above] {$\pi_{l_{i}-1}^{i}$} (m-1-2)
    (m-1-4) edge node [above] {$(1~t^{\varepsilon})$} (m-1-3)
    (m-1-5) edge node [above] {$\left(\begin{matrix}t^{\frac{n}{2}}\\-t^{\frac{n}{2}-\varepsilon}\end{matrix}\right)$} (m-1-4)
    (m-1-6) edge node [left] {$ $} (m-1-5);
\end{tikzpicture}
\\\begin{tikzpicture}
  \matrix (m) [matrix of math nodes,row sep=2em,column sep=2em,minimum width=2em]
  {
     \text{for}~i\geq 2,~ 0 & S_{l_{i}}^{i} & P_{l_{i}}^{i} & P_{l_{i}-1}^{i}\oplus P_{l_{i}}^{i} & P_{l_{i}}^{i} & 0,\\};
  \path[-stealth]
    (m-1-2) edge node [left] {$ $} (m-1-1)
    (m-1-3) edge node [above] {$\pi_{l_{i}}^{i}$} (m-1-2)
    (m-1-4) edge node [above] {$\left(\begin{matrix}1,t\end{matrix}\right)$} (m-1-3)
    (m-1-5) edge node [above] {$\left(\begin{matrix}t^{\varepsilon}\\-t^{\varepsilon -1}\end{matrix}\right)$} (m-1-4)
    (m-1-6) edge node [left] {$ $} (m-1-5);
\end{tikzpicture}
\end{eqnarray*}\end{lemma}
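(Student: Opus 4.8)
The plan is to verify directly that each of the four displayed sequences is an exact complex of projective $E^{i}$-modules, that the projective covers are correct, and that each differential lands inside the radical of its target (so that the resolution is minimal); then deduce $\gldim(E^{1})=\gldim(E^{2})=2$ from the list of all simple modules. The main input throughout is Lemma \ref{p3.3}: it gives the explicit entries $((P_{j}^{i})^{\ast})_{b}$ of the projectives, the relation $A(i)_{j,b}=t^{e(A(i)_{j})}A(i)_{j+1,b-1}$ between consecutive rings in the greedy chain, the multiplicities $e(A(i)_{1})=n$ and $e(A(i)_{j})=\tfrac{n}{2}$ for $2\le j\le l_i-2$ (read off from part (d)), together with $e(A(i)_{l_i-1})=\varepsilon$ and $l_i=i+2$. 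Lemma \ref{f.simple} already handles $S_1^i$; here the analogous bookkeeping is carried out for the simples $S_2^i,S_3^i,S_{l_i-1}^i,S_{l_i}^i$.

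First I would treat $S_2^i$. By Theorem \ref{t1.1.1} the projective cover is $P_2^i\to S_2^i$ with kernel $J(P_2^i)$, whose $b$-th entry is $m_{2}=t^{n/2}A(i)_{2}$ in columns $b\le 2$ and $A(i)_{2,b-2}$ for $b\ge 3$. One checks that the map $f_1^i=(1\ t^{n/2})\colon P_1^i\oplus P_3^i\to P_2^i$ is surjective onto this kernel: the summand $P_1^i$ (first row all of $A(i)_1$) hits the $A(i)_{2,b-2}$-parts via $A(i)_{1,b-1}=t^{n}A(i)_{2,b-2}\subseteq A(i)_{2,b-2}$ and also produces $m_2$ in the first two columns since $t^n\in m_2$, while $t^{n/2}P_3^i$ supplies the remaining generators using $e(A(i)_2)=n/2$ and part (c) of Lemma \ref{p3.3}. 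Next, $\ker f_1^i$: an element $(x,y)$ with $x+t^{n/2}y=0$ forces $x=-t^{n/2}y$, and chasing which $y\in P_3^i$ make $-t^{n/2}y$ land in $P_1^i$ identifies $\ker f_1^i$ with the image of $f_2^i=\binom{t^n}{-t^{n/2}}\colon P_2^i\to P_1^i\oplus P_3^i$; injectivity of $f_2^i$ follows from Lemma \ref{single} (it is a single projective source). Finally minimality: $\Im f_1^i\subseteq m_2\text{-part}=J(P_2^i)$ by construction, and $\Im f_2^i\subseteq J(P_1^i\oplus P_3^i)$ because both entries $t^n$ and $t^{n/2}$ are non-units. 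The case $S_3^i$ ($i\ge 3$) is identical with $e(A(i)_3)=n/2$, using $A(i)_4$ via part (e) of Lemma \ref{p3.3}; the cases $S_{l_i-1}^i$ and $S_{l_i}^i$ are the same computation anchored at the top of the chain, where the relevant multiplicities are $\varepsilon$ (for $A(i)_{l_i-1}$) and $1$ (for $A(i)_{l_i}=k[[t]]$), which is exactly what produces the exponents $\varepsilon$, $\tfrac{n}{2}-\varepsilon$, and $\varepsilon-1$ in $f_5^i,f_6^i,f_7^i,f_8^i$.

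For part (a): when $i=1$ we have $l_1=3$, so the simple modules are $S_1^1,S_2^1,S_3^1=S_{l_1}^1$; their resolutions are given by Lemma \ref{f.simple} (length $1$) and the $S_2^i$ and $S_{l_i}^i$ cases above (length $2$), so $\gldim(E^1)=\sup\Pd=2$, consistent with Proposition \ref{p1.3.6}(3). When $i=2$, $l_2=4$ and the simples are $S_1^2,S_2^2,S_3^2=S_{l_2-1}^2,S_4^2=S_{l_2}^2$, again all of projective dimension $\le 2$ with equality for $S_{l_2}^2$ by Proposition \ref{p1.3.6}(3); hence $\gldim(E^2)=2$. I expect the only real friction to be the denominator-clearing identifications of $\ker f_1^i$ and of $\ker f_5^i,\ker f_7^i$ with the images of the next maps — i.e.\ confirming surjectivity of $f_2^i,f_6^i,f_8^i$ onto the respective kernels — but these are forced entry-by-entry by the single relation $A(i)_{j,b}=t^{e(A(i)_{j})}A(i)_{j+1,b-1}$, so the verification is mechanical rather than conceptual.
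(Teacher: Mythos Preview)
Your proposal is correct and follows the paper's approach: the paper's entire proof is the phrase ``A simple calculation proves the following lemma,'' and your plan is precisely a blueprint for that calculation, using Lemma~\ref{p3.3}(c),(d) to read off the entries of each $P_j^i$ and verify exactness and minimality column by column. One small slip: in part (a) for $i=1$ you invoke the $S_{l_i}^i$ resolution from (b), but that resolution is stated only for $i\ge 2$ (indeed $e(A(1)_{l_1-1})=\tfrac{n}{2}$, not $\varepsilon$); however you already cite Proposition~\ref{p1.3.6}(3), which gives $\Pd_{E^1}(S_{l_1}^1)=2$ directly and closes this case.
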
 
Now we prove the second main result of this paper.
\begin{thm}\label{t3.3.1} (a) For $i\geq 3$ and $3\leq j\leq l_{i}-2$, the minimal projective resolutions of the simple $S_{j}^{i}$ is:
\begin{eqnarray*}\label{eq.7}
\begin{tikzpicture}
  \matrix (m) [matrix of math nodes,row sep=2em,column sep=2em,minimum width=2em]
  {
      0 & S_{j}^{i} & P_{j}^{i} & P_{j-1}^{i}\oplus P_{j+1}^{i} & P_{j}^{i} & 0, \\};
  \path[-stealth]
    (m-1-2) edge node [left] {$ $} (m-1-1)
    (m-1-3) edge node [above] {$\pi_{j}^{i}$} (m-1-2)
    (m-1-4) edge node [above] {$(1~t^{\frac{n}{2}})$} (m-1-3)
    (m-1-5) edge node [above] {$\left(\begin{matrix}t^{\frac{n}{2}}\\-1\end{matrix}\right)$} (m-1-4)
    (m-1-6) edge node [left] {$ $} (m-1-5);
\end{tikzpicture}
\end{eqnarray*}
\noindent (b) $\gldim(E^{i})=2$ for all $i\in \mathbb{N}$.\end{thm}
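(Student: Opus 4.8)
The plan is to prove (a) by induction on $i$ using the functor $\lceil\ \rceil$, and then to read off (b) by collecting the projective dimensions of all simple modules over $E^{i}$.

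For (a), I would take $i=3$ as the base case: since $l_3-2=3$, the only index in range is $j=3$, and the asserted resolution is exactly the resolution of $S_3^{i}$ already recorded in Lemma \ref{resolution}(b), which is valid for all $i\geq 3$ and serves as the $j=3$ instance at every level. For the inductive step, fix $i\geq 4$ and $4\leq j\leq l_i-2$. By Lemma \ref{p3.3}(e) we have $S_j^{i}=S_{j-1}^{i-1}\lceil 1\rceil$ and $P_{j'}^{i-1}\lceil 1\rceil=P_{j'+1}^{i}$ whenever $2\leq j'\leq l_i-1$; since $3\leq j-1\leq l_{i-1}-2$, the induction hypothesis supplies the minimal projective resolution
\begin{eqnarray*}
0\leftarrow S_{j-1}^{i-1}\stackrel{\pi_{j-1}^{i-1}}{\longleftarrow} P_{j-1}^{i-1}\stackrel{f_{9}^{i-1}}{\longleftarrow} P_{j-2}^{i-1}\oplus P_{j}^{i-1}\stackrel{f_{10}^{i-1}}{\longleftarrow} P_{j-1}^{i-1}\leftarrow 0 .
\end{eqnarray*}
Because $j\geq 4$, every projective occurring here has index at least $2$, hence zero first row, so Lemma \ref{shift}(d) (with $X=0$ at the right-hand end, to obtain injectivity) shows that applying $\lceil 1\rceil$ preserves exactness at each node, while Lemma \ref{shift}(a) and (e) show the shifted complex is again a projective resolution with every differential landing in the radical, i.e.\ minimal. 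Unwinding the definition of $f\lceil 1\rceil$ for a morphism between direct sums of indecomposable projectives all of whose indices are at least $2$ shows that the starred matrix is unchanged by $\lceil 1\rceil$; therefore the shifted resolution is exactly
\begin{eqnarray*}
0\leftarrow S_j^{i}\stackrel{\pi_j^{i}}{\longleftarrow} P_j^{i}\stackrel{f_{9}^{i}}{\longleftarrow} P_{j-1}^{i}\oplus P_{j+1}^{i}\stackrel{f_{10}^{i}}{\longleftarrow} P_j^{i}\leftarrow 0
\end{eqnarray*}
with $(f_{9}^{i})^{\ast}=(1~t^{\frac{n}{2}})$ and $(f_{10}^{i})^{\ast}=\left(\begin{matrix}t^{\frac{n}{2}}\\-1\end{matrix}\right)$, which is the claim. (A direct alternative, avoiding induction, is to verify this from Lemma \ref{p3.3}(b)--(d): the relation $A(i)_{j,b}=t^{\frac{n}{2}}A(i)_{j+1,b-1}$ identifies $\ker(\pi_j^{i})^{\ast}=(J(P_j^{i}))^{\ast}$ with $\Im(f_{9}^{i})^{\ast}$, and $\ker(f_{9}^{i})^{\ast}=\Im(f_{10}^{i})^{\ast}$ because the second coordinate of $f_{10}^{i}$ is a unit, forcing injectivity.)

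For (b), I would use that $\gldim(E^{i})$ equals the supremum of $\Pd_{E^{i}}(S_k^{i})$ over the simple modules $k=1,\dots,l_i$. Lemma \ref{f.simple} gives $\Pd_{E^{i}}(S_1^{i})=1$; Lemma \ref{resolution}(b) gives minimal projective resolutions of length $2$ for $S_2^{i},S_3^{i}$ (when $i\geq 3$) and for $S_{l_i-1}^{i},S_{l_i}^{i}$ (when $i\geq 2$); and part (a) does the same for $S_j^{i}$ with $3\leq j\leq l_i-2$. Each such resolution is minimal, two steps long, with nonzero last term, so the projective dimension is exactly $2$. Since $l_i=i+2$ by Lemma \ref{p3.3}(a), the indices $1,2,3,\dots,l_i-2,l_i-1,l_i$ already exhaust $\{1,\dots,l_i\}$, hence $\gldim(E^{i})=2$ for all $i\geq 3$; the cases $i=1,2$ are Lemma \ref{resolution}(a). (As a consistency check, $\gldim(E^{i})\geq 2$ is also Proposition \ref{p1.3.6}(3).)

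The hard part is (a), and inside it the index bookkeeping: Lemma \ref{shift}(d) transfers exactness only when the middle module has zero first row, which fails the instant an indecomposable projective of index $1$ intervenes. This is precisely why the induction must peel off $j=3$ — where $P_{j-2}^{i-1}=P_1^{i-1}$ would otherwise appear in the resolution of $S_{j-1}^{i-1}$ — and handle it separately via Lemma \ref{resolution}(b), invoking $\lceil 1\rceil$ only when $j\geq 4$. The other point demanding care is that $\lceil 1\rceil$ genuinely fixes the starred differentials $(1~t^{\frac{n}{2}})$ and $(f_{10}^{i})^{\ast}$ rather than inserting a duplicated column, which again rests on every index in sight being at least $2$.
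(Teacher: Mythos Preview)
Your proposal is correct and follows essentially the same approach as the paper: induct on $i$, peel off the $j=3$ case via Lemma~\ref{resolution}(b), and for $4\leq j\leq l_i-2$ apply $\lceil 1\rceil$ to the resolution of $S_{j-1}^{i-1}$ using Lemma~\ref{p3.3}(e) and Lemma~\ref{shift}, observing that every projective involved has index at least $2$. The paper argues minimality from Proposition~\ref{p1.3.6} rather than from Lemma~\ref{shift}(a), but this is a cosmetic difference; your added remarks on why the starred differentials are unchanged and the alternative direct verification are extra detail beyond what the paper records.
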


\begin{proof} (a) We proceed by induction on $i$. For $i=3,~l_{i}=5$, and Lemma \ref{resolution} gives the desired result for $S^{3}_{3}$. Assume the result is true for $i-1$ (where $i-1\geq 3$). The minimal projective resolution of $S_{3}^{i}$ is given by Lemma \ref{resolution}(b). If $4\leq j\leq l_{i}-2$, then $3\leq j-1\leq l_{i}-3=l_{i-1}-2$, and the induction hypothesis gives the following minimal resolution of $S^{i-1}_{j-1}$;
\begin{eqnarray}\label{1}
0\leftarrow S_{j-1}^{i-1}\stackrel{\pi_{j-1}^{i-1}}\longleftarrow P_{j-1}^{i-1}\stackrel{(1~t^{\frac{n}{2}})}\longleftarrow P_{j-2}^{i-1}\oplus P_{j}^{i-1}\stackrel{\left(\begin{matrix}t^{\frac{n}{2}}\\-1\end{matrix}\right)}\longleftarrow P_{j-1}^{i-1}\leftarrow 0.
\end{eqnarray}
Since all indices appearing in the minimal resolution of $S^{i-1}_{j-1}$ are greater than one, applying $\lceil 1\rceil$ to (\ref{1}) and using Proposition \ref{exactness} (c) and (d) gives the following $E^{i-1}$-minimal projective resolution for $\lceil 1\rceil(S^{i-1}_{j-1})$:
\begin{eqnarray}\label{2}
\begin{tikzpicture}
  \matrix (m) [matrix of math nodes,row sep=2em,column sep=3em,minimum width=2em]
  {
      0 & \lceil 1\rceil(S_{j-1}^{i-1}) & \lceil 1\rceil(P_{j-1}^{i-1}) & \lceil 1\rceil(P_{j-2}^{i-1}\oplus P_{j}^{i-1}) & \lceil 1\rceil(P_{j-1}^{i-1}) & 0. \\};
  \path[-stealth]
    (m-1-2) edge node [left] {$ $} (m-1-1)
    (m-1-3) edge node [above] {$\lceil 1\rceil(\pi_{j-1}^{i-1})$} (m-1-2)
    (m-1-4) edge node [above] {$(1~t^{\frac{n}{2}})$} (m-1-3)
    (m-1-5) edge node [above] {$\left(\begin{matrix}t^{\frac{n}{2}}\\-1\end{matrix}\right)$} (m-1-4)
    (m-1-6) edge node [left] {$ $} (m-1-5);
\end{tikzpicture}
\end{eqnarray}
Proposition \ref{l1} (j) and Proposition \ref{p3.3}(d) gives the following minimal $E^{i}$-projective resolution of $S^{i}_{j}$;
\begin{eqnarray*}
\begin{tikzpicture}
  \matrix (m) [matrix of math nodes,row sep=2em,column sep=2em,minimum width=2em]
  {
      0 & S_{j}^{i} & P_{j}^{i} & P_{j-1}^{i}\oplus P_{j+1}^{i} & P_{j}^{i} & 0. \\};
  \path[-stealth]
    (m-1-2) edge node [left] {$ $} (m-1-1)
    (m-1-3) edge node [above] {$\pi_{j}^{i}$} (m-1-2)
    (m-1-4) edge node [above] {$(1~t^{\frac{n}{2}})$} (m-1-3)
    (m-1-5) edge node [above] {$\left(\begin{matrix}t^{\frac{n}{2}}\\-1\end{matrix}\right)$} (m-1-4)
    (m-1-6) edge node [left] {$ $} (m-1-5);
\end{tikzpicture}
\end{eqnarray*}
\noindent (b) This is a direct consequence of part (a) and Lemmas \ref{f.simple} and \ref{resolution}. \end{proof}

\section{Acknowledgements}

This paper is based on the authors thesis at the University of Toronto. The author dedicates this paper to the memory of Ragnar-Olaf Buchweitz for being an incredible supervisor who showed tremendous patience, expertise, and for all the discussions and feedbacks. More thanks go to Marco Gualtieri and Joe Repka for very helpful discussions, Osamu Iyama for his feedback on the thesis, and Graham J. Leuschke for his feedback on the paper.


\end{document}